\newtheorem{thm}{Theorem}
\newtheorem{lem}[thm]{Lemma}
\newtheorem{prop}[thm]{Proposition}
\newtheorem{cor}[thm]{Corollary}
\begin{document}
\title{A criterion for the Legendrian simplicity of the connected sum}
\author{Byung hee An}
\thanks{This work was supported by IBS-R003-D1}
\address{Center for Geometry and Physics, Institue for Basic Science (IBS), Pohang 790-784, Republic of Korea}
\email{anbyhee@ibs.re.kr}
\begin{abstract}
In this paper, we provide the necessary and sufficient conditions for the connected sum of knots in $S^3$ to be Legendrian simple.
\end{abstract}
\subjclass[2010]{Primary 57M25; Secondary 57R17}
\keywords{Legendrian knot, Simplicity, Connected sums}
\maketitle

\section{Introduction}

Throughout this paper, we consider oriented Legendrian knots in $S^3$ with the standard tight contact structure $\xi_{std}$.

A Legendrian knot theory is the study of Legendrian knots up to Legendrian isotopy which is much more restricted compared to the smooth or piecewise-linear isotopy in the classical knot theory. More precisely, by considering the ambient space as the one-point compactification of $\mathbb{R}^3$ and taking a suitable projection, these two theories are differ by whether {\em positive and negative (de)stabilizations} are allowed or not. Therefore for given topological knot type $K$, there are infinitely many inequivalent Lengendrian isotopy classes of topological knot type $K$, and we denote by $\mathcal{L}(K)$ the set of Legendrian isotopy classes of $K$.
There are two well-known Legendrian knot invariants, {\em Thurston-Bennequin number $tb(L)$} and {\em rotation number $r(L)$} that can be used to classify Legendrian knots in $\mathcal{L}(K)$, and they are called {\em classical invariants}. Refer to \cite{Et} for details.
Then a topological knot type $K$ is said to be {\em Legendrian simple} if Legendrian knots in $\mathcal{L}(K)$ are classified by the classical invariants.

The Legendrian simplicity for unknot has been shown by Eliashberg and Fraser in 1995 \cite{EF}, and for figure-eight and torus knots by Etnyre and Honda in 2001 \cite{EH1}. Especially, Etnyre and Honda in their followed paper \cite{EH2} provided a complete combinatorial description for the connected sum of Legendrian knots, and proved that the Legendrian simplicity is not closed under the connected sum. 

On the other hand, they also proved in \cite{EH3} that the cabling operation preserves the Legendrian simplicity under the {\em uniform thickness property} (UTP). The important benifits of (UTP) are that it will be preserved not only by cabling with the certain condition, but also by the connected sum. Hence, if started with Legendrian simple and UTP knot, one may produce the arbitrarily many Legendrian simple knots by the iterated cabling as studied in \cite{ELT, La1, La2}.
However, as seen above, the Legendrian simplicity may not be preserved under connected sum, and therefore the attempt of cabling after connected sum may fail.

This paper provides the necessary and sufficient conditions for the connected sum to be Legendrian simple as follows.

\begin{thm}\label{thm:criterion}
Let $K=(\#^{a_1}K_1)\#\dots(\#^{a_n}K_n)$ be a connected sum decomposition of $K$ with pairwise distinct prime knots. Then $K$ is Legendrian simple if and only if 
all $K_i$'s are Legendrian simple and one of the following is satisfied:
\begin{enumerate}
\item $|Peak(K_i)|=1$ for all $i$;
\item There exists only one $i$ such that $|Peak(K_i)|=2$, $a_i\ge 2$ and $|Peak(K_j)|=1$ if $j\neq i$;
\item There exists only one $i$ such that $|Peak(K_i)|\ge 3$, $a_i=1$ and $|Peak(K_j)|=1$ if $j\neq i$.
\end{enumerate}
Here, $Peak(K_i)$ is the set of Legendrian knots in $\mathcal{L}(K_i)$ which can not be destabilized in either ways.
\end{thm}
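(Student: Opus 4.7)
The plan is to use the Etnyre--Honda combinatorial description of $\mathcal{L}(K_1 \# \cdots \# K_n)$ from \cite{EH2}: Legendrian classes of the connect sum correspond bijectively to tuples $(L_1, \ldots, L_n)$ with $L_i \in \mathcal{L}(K_i)$, modulo the equivalence generated by (i) shifting a $\pm$-stabilization between two entries whenever the receiving side admits the matching destabilization, and (ii) transposing entries whose underlying prime knot types coincide. The classical invariants read $tb(L_1 \# \cdots \# L_n) = \sum tb(L_i) + (n-1)$ and $r(L_1 \# \cdots \# L_n) = \sum r(L_i)$. Under this translation, Legendrian simplicity of $K$ becomes the statement that any two tuples with the same totals lie in the same equivalence class.

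The first step is to dispatch the necessity of each $K_i$ being Legendrian simple: if some $K_i$ admits two distinct classes sharing $(tb, r)$, then replacing one by the other in a fixed tuple produces an inequivalent decomposition at identical totals. The rest of the argument rests on a pushing device: any non-peak class admits at least one destabilization, so move (i) lets us iteratively destabilize $L_j$ at the cost of stabilizing another entry. When $K_j$ has a single peak $P_j$ this process terminates with $L_j = P_j$. Applying this in every single-peak factor reduces an arbitrary decomposition to a normal form in which all single-peak summands sit at their peaks.

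With this reduction in hand, the sufficiency in cases (1)--(3) follows: in (1) the normal form is the fully determined $(P_1, \ldots, P_n)$; in (3) only the single multi-peak entry $L_i$ remains, and its $(r,tb)$ is forced by the totals and thus determines $L_i$ uniquely via Legendrian simplicity of $K_i$; in (2) the remaining $a_i$ entries all lie in $\mathcal{L}(K_i)$, and combining the two-peak bookkeeping with the transposition (ii) collapses all tuples with prescribed totals into a single equivalence class, the key point being that with $a_i \ge 2$ copies the transposition is genuinely available to align differently-peaked distributions.

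For the converse direction, the plan is to exhibit explicit clashing tuples in every excluded configuration. Two distinct multi-peak factors give rise to maximal tuples obtained by pairing peaks differently, which after suitable stabilization adjustments share classical invariants but admit no available move (i) at their peak components and no applicable transposition. A factor with $|Peak|\ge 3$ occurring with multiplicity $\ge 2$ produces peak-pair clashes of the form $(P,P,\ldots)$ versus $(P'',P''',\ldots)$ at matching totals that (ii) cannot reconcile. The main obstacle will be verifying that these clashing tuples really sit in distinct equivalence classes: this requires tracking how the overlap regions of the individual mountain ranges interact through chains of moves, and showing that no such chain -- even one that dives deep into the valleys between peaks -- bridges the two tuples. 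That obstruction analysis is the core combinatorial content of the theorem.
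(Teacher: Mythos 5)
Your outline reproduces the paper's general framework (the Etnyre--Honda bijection of \cite{EH2}, shuffling stabilizations into normal form, reducing single-peak summands to their peaks --- this matches the sufficiency arguments in Propositions~\ref{prop:simplepeaks1} and \ref{prop:simplepeaks2}), but it has a genuine gap at each of the two hard points. First, the necessity step. You assert that if some $K_i$ is nonsimple, then replacing $L_1$ by a distinct class $L_1'$ with the same invariants in a fixed tuple ``produces an inequivalent decomposition,'' but inequivalence in $\mathcal{L}(K_i)$ does not transfer for free: move (i) routes stabilizations through the other summands, so two tuples differing in one slot could a priori be joined by a chain that destabilizes both classes through common descendants. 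Ruling this out is the entire content of Propositions~\ref{prop:simple1} and \ref{prop:simple2}. The paper chooses the nonsimple point $N_{max}$ to be \emph{maximal} among nonsimple points (Lemma~\ref{lem:nmax}) and pads the tuple with copies of a maximal peak $L$; then either the fiber over $N_{max}$ contains a peak, and distinctness follows from the bijection $\operatorname{Sym}^n(Peak(\mathcal{L}(K)))\to Peak(\mathcal{L}(\#^nK))$ of Corollary~\ref{cor:peaks1}, or $N_{max}$ is a valley with a two-point fiber, and the path formalism (Lemmas~\ref{lem:paths1} and \ref{lem:paths2}) forces any connecting chain to live in the simple region above $N_{max}$, where the two parents sit in different connected components. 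Without the maximality of $N_{max}$, your swap argument simply does not close.

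Second, for the converse you explicitly defer ``the obstruction analysis'' as ``the main obstacle,'' but that analysis is the theorem; moreover your proposed witnesses point in the wrong direction. Differently paired peak tuples $(P,P,\dots)$ versus $(P'',P''',\dots)$ generically do \emph{not} share classical invariants (nothing forces $2\,tb(P)=tb(P'')+tb(P''')$ and $2\,r(P)=r(P'')+r(P''')$), and once you apply ``suitable stabilization adjustments'' the components are no longer peaks, so your claim that no move (i) is available evaporates. The paper instead uses two distinct valleys $V_1,V_2$ (which exist precisely when $|Peak|\ge 3$, since $|Peak|=|Valley|+1$) and the crossed parent pairs $L_1\#L_2\#(\#^{n-2}L_3)$ versus $L_1'\#L_2'\#(\#^{n-2}L_3)$, which share $(tb,r)$ by construction; distinctness is then proved by the hillside argument of Proposition~\ref{prop:simplepeaks1}: any realizing path system would have to cross one of the lines $\ell_1,\ell_2$, and the crossing move $S_+$ would demand a simultaneous $S_+^{-1}$ in another component, which is blocked because no points of the mountain range lie above the opposite hillside $\ell_2'$. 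This separation lemma --- not transposition bookkeeping --- is also what you would need in your case (2): transpositions are available whenever two summands agree and do no aligning work; the paper's actual mechanism there is that both peaks stabilize down to the common valley, $S_-^{r'(P_2)}(P_2)=V=S_+^{-r'(P_1)}(P_1)$, which converts copies of one peak into the other, made rigorous by the invariants $X,Y$ and the explicit chain of equivalences in the proof of Proposition~\ref{prop:simplepeaks1}.
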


Therefore this theorem provides the way to produce the new Legendrian simple knots. Moreover, this can be used with the the cabling construction, as well.

The rest of this paper consists of the following. In section 2, we introduce the basic notions and briefly review the known results about Legendrian connected sum. In section 3, we prove the main result.

\section{preliminaries}

\subsection{Basic notions}
A {\em topological oriented knot}, simply a {\em knot $K$} from now on, is a smooth embedding of $S^1$ into $S^3$, and a {\em knot type $[K]$} is a smooth isotopy class of $K$. The natural orientation of $K$ comes from $d\theta$ where $S^1$ is parametrized by $\theta$.
A knot $K$ is {\em trivial} or an {\em unknot} if $K$ bounds an embedded disc in $S^3$.

A {\em Legendrian knot $L$} is a Legendrian embedding of $S^1$ into $(S^3,\xi_{std})$, that is, $L$ is everywhere tangent to the contact plane $\xi_{std}$. Here $\xi_{std}$ is the standard contact structure on $S^3$. Similarly, a {\em Legendrian knot type $[L]$} is the Legendrian isotopy class of $L$.
From now on, we simply use $K$, $L$ instead of $[K]$, $[L]$ to denote knot types unless any ambiguity occurs.

For given knot type $K$, we denote by $\mathcal{L}(K)$ the set of Legendrian knots of topological knot type $K$. Then as mentioned before, any two elements in $\mathcal{L}(K)$ can be connected with a sequence of two special types of isotopies, called {\em positive and negative stabilizations $S_{\pm}$}, and their inverses. Note that these two stabilizations are commutative. In other words, $S_+(S_-(L))=S_-(S_+(L))$ for any Legendrian knot $L$.
In the diagrammatic viewpoint, $S_{\pm}$ is as depicted in Figure~\ref{fig:stabilization}. Hence we can regard $\mathcal{L}(K)$ as a connected, directed graph by adding directed edges $(L, S_+(L))$ and $(L,S_{-}(L))$ for each $L\in\mathcal{L}(K)$.

\begin{figure}[ht]
\begingroup%
  \makeatletter%
  \providecommand\color[2][]{%
    \errmessage{(Inkscape) Color is used for the text in Inkscape, but the package 'color.sty' is not loaded}%
    \renewcommand\color[2][]{}%
  }%
  \providecommand\transparent[1]{%
    \errmessage{(Inkscape) Transparency is used (non-zero) for the text in Inkscape, but the package 'transparent.sty' is not loaded}%
    \renewcommand\transparent[1]{}%
  }%
  \providecommand\rotatebox[2]{#2}%
  \ifx\svgwidth\undefined%
    \setlength{\unitlength}{176bp}%
    \ifx\svgscale\undefined%
      \relax%
    \else%
      \setlength{\unitlength}{\unitlength * \real{\svgscale}}%
    \fi%
  \else%
    \setlength{\unitlength}{\svgwidth}%
  \fi%
  \global\let\svgwidth\undefined%
  \global\let\svgscale\undefined%
  \makeatother%
  \begin{picture}(1,0.28711829)%
    \put(0,0){\includegraphics[width=\unitlength]{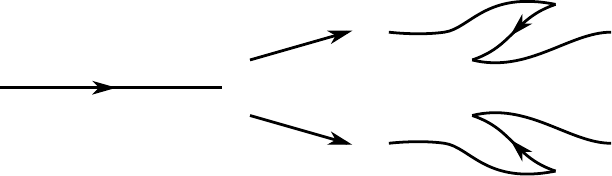}}%
    \put(0.40909091,0.23446825){\color[rgb]{0,0,0}\makebox(0,0)[lb]{\smash{$S_+$}}}%
    \put(0.40909091,0.02992279){\color[rgb]{0,0,0}\makebox(0,0)[lb]{\smash{$S_-$}}}%
  \end{picture}%
\endgroup%
\caption{Positive and negative stabilization $S_{\pm}$ in the front projection}
\label{fig:stabilization}
\end{figure}

The classical invariants, Thurston-Bennequin number $tb(L)$ and the rotation number $r(L)$, for a Legendrian knot $L$ 
change as follows under stabilizations.
$$
tb(S_{\pm}(L)) = tb(L) -1,\quad r(S_{\pm}(L))=r(L)\pm 1.
$$
This implies that $\mathcal{L}(K)$ has no directed loop different from constant, and admits a graded poset structure $(\mathcal{L}(K),\prec)$ by declaring $S_{\pm}(L)\prec L$, whose grading is given by $tb$.
We say that $L$ is a {\em parent} of both $S_+(L)$ and $S_-(L)$. 

On the other hand, there is another important poset $\mathcal{M}(K)$, called {\em mountain range} defined by the range $\Phi(\mathcal{L}(K))$ of the pair $\Phi=(tb,r):\mathcal{L}(K)\to\mathbb{Z}^2$ of classical invariants.
We will follow the common convention of regarding $tb$ and $r$ as vertical and horizontal axes, respectively, and the words, such as {\em left, right} and so on, have the suitable meaning according to this convention. Especially, {\em maximal} means having maximal $tb$ among elements of given (subset of) $\mathcal{L}(K)$ or $\mathcal{M}(K)$.

Note that $\mathcal{M}(K)$ is never bounded below because $tb$ can be decreased arbitrarily by taking stabilizations in $\mathcal{L}(K)$. However, Bennequin in \cite{Be} showed that $\mathcal{M}(K)$ is always bounded above as follows.

\begin{thm}\cite{Be}\label{thm:Benn}
For given knot type $K$ and $L\in\mathcal{L}(K)$,
$$tb(L)+ |r(L)|\le 2 g(K)-1,$$
where $g(K)$ is a {\em genus} of $K$.
\end{thm}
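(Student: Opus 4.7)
The plan is to bound the classical invariants of $L$ via the characteristic foliation on a minimal-genus Seifert surface, using tightness of $\xi_{std}$ as the essential input.

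I would first choose a Seifert surface $\Sigma\subset S^3$ with $\partial\Sigma = L$ and $g(\Sigma)=g(K)$, and perturb it so that the characteristic foliation $\xi_{std}|_\Sigma$ is Morse--Smale: its singular set consists of finitely many elliptic and hyperbolic points in the interior, each signed $\pm$ according to the relative orientation of $\xi_{std}$ and $T\Sigma$. Writing $e_\pm,h_\pm$ for the signed counts, Poincar\'e--Hopf applied to the line field $\xi_{std}|_\Sigma$ yields $\chi(\Sigma) = (e_+ + e_-) - (h_+ + h_-) = 1 - 2g(K)$. Classical computations (Kanda, Eliashberg; see the survey \cite{Et}) express $tb(L)$ and $r(L)$ as signed linear combinations of $e_\pm$ and $h_\pm$ -- equivalently, in terms of the dividing set $\Gamma\subset\Sigma$ separating the sign regions $\Sigma_\pm$ -- reducing the target inequality $tb(L) + |r(L)| \le 2g(K)-1$ to a bound on the excess elliptic singularities over hyperbolic ones on each side of the partition $\Sigma = \Sigma_+\cup\Sigma_-$.

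I would then invoke tightness of $\xi_{std}$ (Bennequin \cite{Be}) together with Giroux's elimination lemma: any pair of same-sign elliptic and hyperbolic singularities connected by a singular leaf of $\xi_{std}|_\Sigma$ may be cancelled. In a tight contact structure, this cancellation can be iterated until no such pair remains. Any residual unpaired elliptic singularity would force a disc region of $\Sigma$ bounded by a singular leaf along which $\xi_{std}$ is tangent to $\Sigma$, that is, an overtwisted disc -- contradicting tightness. This delivers the signing constraints on $e_\pm,h_\pm$ needed to extract the claimed inequality from the formulas above.

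The principal difficulty is the interaction of the elimination step with the minimal-genus hypothesis on $\Sigma$: one must preclude the creation of compressing discs, which would otherwise reduce the genus and invalidate the Poincar\'e--Hopf input. This is where Bennequin's original braid-theoretic proof and the modern convex-surface approach (Giroux--Eliashberg) differ in technique, but both ultimately rest on the tightness of $\xi_{std}$ on $S^3$, which is the single non-trivial global ingredient.
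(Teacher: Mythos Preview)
The paper does not prove this theorem at all: it is quoted from \cite{Be} as background, with the remark that the bound has since been improved and that the details are omitted as inessential. So there is no in-paper argument to compare against. Your sketch follows the modern Eliashberg--Giroux route (characteristic foliation on a Seifert surface, Poincar\'e--Hopf, elimination of cancelling singularity pairs, tightness to exclude overtwisted discs), which is a correct and by now standard approach; Bennequin's original argument in \cite{Be} is braid-theoretic and rather different in flavor, as you yourself note.

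One point worth correcting: your final paragraph overstates the difficulty. Giroux's elimination lemma is a $C^0$-small isotopy of the surface (equivalently, of the contact structure near it) that cancels a same-sign elliptic--hyperbolic pair connected by a leaf; it does not alter the topological type of $\Sigma$, so no compressing discs can appear and the genus is preserved automatically. The minimal-genus hypothesis enters only at the end, when you read off $-\chi(\Sigma)=2g(K)-1$, not during the elimination. Also, the phrasing ``any residual unpaired elliptic singularity would force \dots\ an overtwisted disc'' is slightly off: what tightness buys you is that after elimination one can arrange, say, $e_-=h_+=0$ (no negative elliptics, no positive hyperbolics), and then the formulas $r(L)=(e_+-h_+)-(e_--h_-)$ and $-\chi(\Sigma)=-(e_++e_-)+(h_++h_-)$, together with the self-linking relation $tb(L)\pm r(L)=sl(T_\pm)$ for the transverse push-offs, give the inequality directly. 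You may find it cleaner to pass to the transverse push-off first and prove $sl\le 2g-1$, then deduce the Legendrian statement.
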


This result has been improved in many ways, related to classical link invariants such as genus \cite{Be, Ru2}, polynomials \cite{Mo,Ru1}, or other invariants such as Khovanov homology \cite{Ng}, knot Floer homology \cite{Pl} and so on.
%
However they are not essential in this paper and we omit the detail.
%
%

Though $\Phi$ is not injective in general, 
we draw $\mathcal{L}(K)$ on the $(tb,r)$-plane via $\Phi$ by perturbing edges and vertices slightly if necessary, as seen in Figure~\ref{fig:peakvalley}.

\begin{figure}[ht]
\begingroup%
  \makeatletter%
  \providecommand\color[2][]{%
    \errmessage{(Inkscape) Color is used for the text in Inkscape, but the package 'color.sty' is not loaded}%
    \renewcommand\color[2][]{}%
  }%
  \providecommand\transparent[1]{%
    \errmessage{(Inkscape) Transparency is used (non-zero) for the text in Inkscape, but the package 'transparent.sty' is not loaded}%
    \renewcommand\transparent[1]{}%
  }%
  \providecommand\rotatebox[2]{#2}%
  \ifx\svgwidth\undefined%
    \setlength{\unitlength}{252.14074313bp}%
    \ifx\svgscale\undefined%
      \relax%
    \else%
      \setlength{\unitlength}{\unitlength * \real{\svgscale}}%
    \fi%
  \else%
    \setlength{\unitlength}{\svgwidth}%
  \fi%
  \global\let\svgwidth\undefined%
  \global\let\svgscale\undefined%
  \makeatother%
  \begin{picture}(1,0.44677759)%
    \put(0,0){\includegraphics[width=\unitlength]{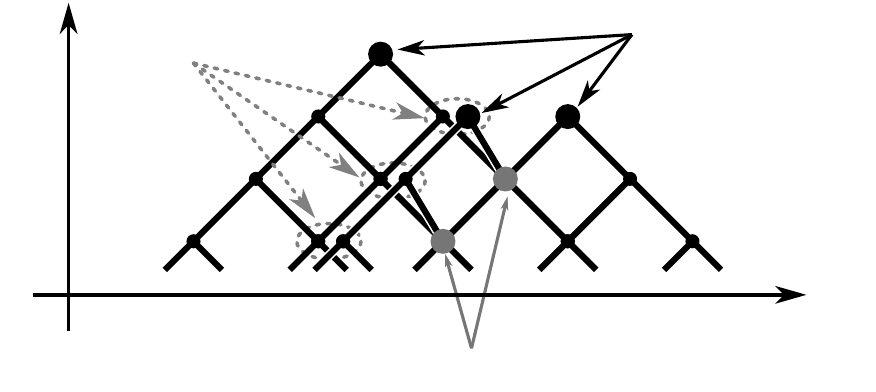}}%
    \put(0.72996082,0.41107544){\color[rgb]{0,0,0}\makebox(0,0)[lb]{\smash{peaks}}}%
    \put(-0.00308218,0.41543872){\color[rgb]{0,0,0}\makebox(0,0)[lb]{\smash{$tb$}}}%
    \put(0.93358389,0.11000416){\color[rgb]{0,0,0}\makebox(0,0)[lb]{\smash{$r$}}}%
    \put(0.11909167,0.382859){\color[rgb]{0,0,0}\makebox(0,0)[lb]{\smash{nonsimple}}}%
    \put(0.52226529,0.00819265){\color[rgb]{0,0,0}\makebox(0,0)[lb]{\smash{valleys}}}%
  \end{picture}%
\endgroup%
\caption{$Peak$ and $Valley$ for a poset}
\label{fig:peakvalley}
\end{figure}

Then we say that $K$ is {\em Legendrian simple} if $\Phi$ is injective.
The simplicity can be defined pointwise as well, according to the number of the inverse image under $\Phi$. In other words, a point $p\in\mathcal{M}(K)$ is {\em simple} if $p$ has only one inverse under $\Phi$, and $p$ is {\em nonsimple} otherwise.

For a given poset $\mathcal{P}$, we denote by $Peak(\mathcal{P})$ the set of elements of $\mathcal{P}$ having no parent, and by $Valley(\mathcal{P})$ the set of elements $V\in\mathcal{P}$ such that $V$ possesses two parents which have no common parent. A possible example of $Peak$ and $Valley$ for a poset is depicted in Figure~\ref{fig:peakvalley}.
We will consider $Peak(\mathcal{L}(K))$, $Valley(\mathcal{L}(K))$ as well as $Peak(\mathcal{M}(K))$, $Valley(\mathcal{M}(K))$.

If $K$ is Legendrian simple, then $\mathcal{L}(K)$ and $\mathcal{M}(K)$ are isomorphic as posets, and we consider $Peak$ and $Valley$ only for $\mathcal{L}(K)$ and denote by $Peak(K)$ and $Valley(K)$.
One of the obvious observation for a Legendrian simple knot $K$ is that 
$$|Peak(K)|=|Valley(K)|+1<\infty.$$

On the other hand, if $K$ is not Legendrian simple, then we can choose a point $N_{max}$ which is maximal among nonsimple points in $\mathcal{M}(K)$.
Observe that $N_{max}$ may not be unique, and any point above $N_{max}$ in $\mathcal{M}(K)$ is simple by definition.
\begin{lem}\label{lem:nmax}
Let $K$ be a Legendrian nonsimple knot and $N_{max}$ be as above.
Then either 
\begin{enumerate}
\item $\Phi^{-1}(N_{max})\cap Peak(\mathcal{L}(K))\neq\emptyset$ or 
\item $|\Phi^{-1}(N_{max})|=2$ and $N_{max}\in Valley(\mathcal{M}(K))$.
\end{enumerate}
\end{lem}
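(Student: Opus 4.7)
The plan is to suppose condition (1) fails and deduce that (2) holds. Write $N_{max}=(t,r)$. By the choice of $N_{max}$, every lattice point of $\mathcal{M}(K)$ strictly above it is simple, so whenever $(t+1,r\pm 1)$ or $(t+2,r)$ lies in the mountain range, it has a unique Legendrian preimage.

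First I would bound $|\Phi^{-1}(N_{max})|$. Under the hypothesis $\Phi^{-1}(N_{max})\cap Peak(\mathcal{L}(K))=\emptyset$, every $L\in\Phi^{-1}(N_{max})$ is of the form $S_{+}(L')$ or $S_{-}(L')$ for some parent $L'\in\mathcal{L}(K)$, with $\Phi(L')=(t+1,r-1)$ in the first case and $(t+1,r+1)$ in the second. Because both potential source points are simple, two distinct preimages of $N_{max}$ cannot both be positive stabilizations (else they would agree, being $S_{+}$ of the same unique Legendrian knot upstairs), and similarly for negative stabilizations. Hence $|\Phi^{-1}(N_{max})|\le 2$, and nonsimplicity of $N_{max}$ forces equality. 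After relabeling, I may write $\Phi^{-1}(N_{max})=\{L_1,L_2\}$ with $L_1=S_{+}(L_1')$, $\Phi(L_1')=(t+1,r-1)$, and $L_2=S_{-}(L_2')$, $\Phi(L_2')=(t+1,r+1)$.

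Second, I would verify that $N_{max}\in Valley(\mathcal{M}(K))$. The two parents of $N_{max}$ in $\mathcal{M}(K)$ are precisely $(t+1,r\pm 1)$, and their only possible common parent in $\mathcal{M}(K)$ is $(t+2,r)$. If that point lay in $\mathcal{M}(K)$, its unique preimage $L''$ would, in order to witness the covering relations downstairs, have to satisfy $S_{-}(L'')=L_1'$ and $S_{+}(L'')=L_2'$. But then commutativity of stabilizations yields
\[
L_1=S_{+}(L_1')=S_{+}(S_{-}(L''))=S_{-}(S_{+}(L''))=S_{-}(L_2')=L_2,
\]
contradicting $L_1\ne L_2$. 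Therefore no such common parent exists and $N_{max}$ is a valley of $\mathcal{M}(K)$, giving~(2).

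The main obstacle is really bookkeeping: one must carefully translate the "parent" and "valley" relations between the upstairs poset $\mathcal{L}(K)$ (where stabilizations live and peaks are literal local maxima) and the downstairs poset $\mathcal{M}(K)$ (where the candidate valley sits). Once the maximality of $N_{max}$ delivers uniqueness of preimages directly above it, the identity $S_{+}S_{-}=S_{-}S_{+}$ supplies the only nontrivial ingredient, and the entire argument is a single application of this commutativity at two different heights.
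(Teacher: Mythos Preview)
Your argument is correct and follows the same route as the paper: assume no $L_i$ is a peak, use simplicity of the points $(t+1,r\pm 1)$ above $N_{max}$ to force exactly two preimages (one a positive and one a negative stabilization), and then rule out a common grandparent $(t+2,r)$. The paper compresses the last step into the phrase ``each has only 1 parent, and therefore $N_{max}\in Valley(\mathcal{M}(K))$'' without spelling out the $S_{+}S_{-}=S_{-}S_{+}$ contradiction you write down; your version makes that implicit step explicit, but the underlying idea is identical.
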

\begin{proof}
Let $\Phi^{-1}(N_{max})=\{L_1,\dots,L_k\}$. Since $N_{max}$ is not simple, $k\ge 2$.

Suppose $L_i\not\in Peak(\mathcal{L}(K))$ for all $i$. 
Then all $L_i$'s have at least 1 parent in $\mathcal{L}(K)$ and all these parents become parents of $N_{max}$ in $\mathcal{M}(K)$ via $\Phi$.
Since $N_{max}$ has at most 2 parents in $\mathcal{M}(K)$ and all points above $N_{max}$ are simple, there are at most 2 Legendrian knots which are parents of $L_i$'s. 
Hence $\Phi^{-1}(N_{max})$ consists of exactly two Legendrian knots where each has only 1 parent, and therefore $N_{max}\in Valley(\mathcal{M}(K)$. 
\end{proof}

Examples of possible local pictures near $N_{max}$ are depicted in Figure~\ref{fig:nmax}. The left two are involving at least 1 peak but the right one corresponds to a valley in $\mathcal{M}(K)$.

\begin{figure}[ht]
\begingroup%
  \makeatletter%
  \providecommand\color[2][]{%
    \errmessage{(Inkscape) Color is used for the text in Inkscape, but the package 'color.sty' is not loaded}%
    \renewcommand\color[2][]{}%
  }%
  \providecommand\transparent[1]{%
    \errmessage{(Inkscape) Transparency is used (non-zero) for the text in Inkscape, but the package 'transparent.sty' is not loaded}%
    \renewcommand\transparent[1]{}%
  }%
  \providecommand\rotatebox[2]{#2}%
  \ifx\svgwidth\undefined%
    \setlength{\unitlength}{204.15234375bp}%
    \ifx\svgscale\undefined%
      \relax%
    \else%
      \setlength{\unitlength}{\unitlength * \real{\svgscale}}%
    \fi%
  \else%
    \setlength{\unitlength}{\svgwidth}%
  \fi%
  \global\let\svgwidth\undefined%
  \global\let\svgscale\undefined%
  \makeatother%
  \begin{picture}(1,0.24534805)%
    \put(0,0){\includegraphics[width=\unitlength]{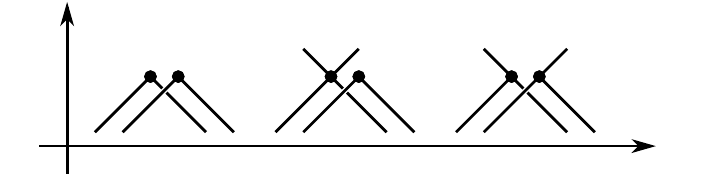}}%
    \put(-0.00325278,0.21557555){\color[rgb]{0,0,0}\makebox(0,0)[lb]{\smash{$tb$}}}%
    \put(0.93722136,0.03923665){\color[rgb]{0,0,0}\makebox(0,0)[lb]{\smash{$r$}}}%
  \end{picture}%
\endgroup%
\caption{Local pictures of $\mathcal{L}(K)$ near $N_{max}$}
\label{fig:nmax}
\end{figure}

\subsection{Connected sums}
Let $K_1, K_2$ be two knots in $S^3$. Roughly speaking, a {\em connected sum} $K_1\#K_2\subset S^3$ is a knot obtained by gluing $K_1\setminus \alpha_1$ and $K_2\setminus \alpha_2$ along the oriented end points, where $\alpha_i$'s are subarcs of $K_i$ unknotted in small enough 3 balls $B_i\subset S^3$.
It is equivalent to the connected sum of pairs $(S^3, K_1)$ and $(S^3, K_2)$.
For convenience sake, we denote by $\#^nK$ the connected sum of $n$-copies of $K$.

A nontrivial knot $K$ is said to be {\em prime} if $K=K_1\#K_2$ implies that one of $K_i$'s is trivial. If $K$ is neither trivial nor prime, we say that $K$ is {\em composite}.
Then for any nontrivial knot $K$, there is a unique prime decomposition $K=(\#^{a_1}K_1)\#\dots(\#^{a_n}K_n)$ up to permutting $K_i$'s, where $a_i\ge 1$ and all $K_i$'s are pairwise different prime knots.
For two knots $K_1$ and $K_2$,
we say that $K_1$ and $K_2$ are {\em relatively prime} unless there is nontrivial common connected summand.

To make this concepts fit into the Legendrian knot theory, we need the following.

\begin{thm}\cite{Co} Given two 3-manifolds $M_1, M_2$
there is an isomorphism
$$
\pi_0(Tight(M_1))\times \pi_0(Tight(M_2))\stackrel{\sim}{\longrightarrow}\pi_0(Tight(M_1\#M_2)),
$$
where $\pi_0(Tight(M_i))$ is the set of contact structures on $M_i$ up to contact isotopy.
\end{thm}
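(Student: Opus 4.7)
The plan is to construct the forward map concretely via Darboux balls and then verify bijectivity using convex surface theory. Given tight $(\xi_1,\xi_2)$ on $(M_1,M_2)$, I choose Darboux balls $D_i\subset(M_i,\xi_i)$, excise them, and glue along the boundary 2-spheres via an orientation-reversing diffeomorphism. To see this is canonically defined up to isotopy, I appeal to Eliashberg's classification of tight contact structures on $B^3$ with convex boundary of standard dividing set (a single circle): there is a unique such structure up to isotopy fixing $\partial B^3$, and moreover any two Darboux balls in a tight contact 3-manifold are contact-isotopic. Thus the construction descends to a well-defined map on $\pi_0(Tight(M_1))\times\pi_0(Tight(M_2))$, provided the output is tight.

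For tightness of $\xi_1\#\xi_2$, I would argue by contradiction. Suppose an overtwisted disk $D\subset M_1\# M_2$ exists. After a small perturbation, arrange the separating 2-sphere $S$ to be convex with dividing set a single curve (the standard configuration inherited from the connected sum construction), and $D\pitchfork S$. The intersection $D\cap S$ consists of arcs and circles; applying the Legendrian realization principle to innermost circles, and a bypass/disk-surgery argument on outermost arcs, one simplifies $D\cap S$ until an overtwisted disk lives entirely inside $M_i\smallsetminus D_i$ for some $i$, contradicting tightness of $\xi_i$. This is the substantive portion of Colin's theorem.

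For bijectivity, I would use the same cutting technology. Surjectivity: given tight $\xi$ on $M_1\# M_2$, any smooth separating sphere (unique up to smooth isotopy by the prime decomposition) can be perturbed to a convex sphere $S$. Because $\xi$ is tight, the dividing set on $S$ must consist of a single curve — a disconnected dividing set on an embedded $S^2$ would bound, via an innermost disk, a convex disk with boundary-parallel dividing set, producing an overtwisted disk by Giroux's criterion. Cutting along $S$ and capping each side with the unique tight ball with standard convex boundary yields the desired $(\xi_1,\xi_2)$. Injectivity: if two pairs $(\xi_1,\xi_2)$ and $(\xi_1',\xi_2')$ produce contact-isotopic structures on $M_1\# M_2$, an ambient contact isotopy carries one separating sphere onto the other (after an additional contact-isotopic adjustment of the dividing set using the uniqueness of convex spheres with one dividing curve in a tight manifold), and the restrictions to the complements then yield contact isotopies between $\xi_i$ and $\xi_i'$.

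The hardest step is the tightness of the connected sum. The difficulty is that naive innermost-circle removal on $D\cap S$ may not terminate without new overtwisted disks appearing elsewhere, so the bypass/convex-decomposition argument must be executed carefully, tracking how each simplification of the trace on $S$ interacts with the rest of $D$. Everything else — uniqueness of tight balls, the dividing-set-connected lemma for spheres, and smooth uniqueness of the separating sphere — is by now standard and packages the construction neatly once tightness is in hand.
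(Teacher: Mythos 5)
The paper does not prove this statement at all: it is quoted verbatim from Colin's paper \cite{Co} and used as a black box, so the comparison here is against Colin's published argument rather than anything in the present text. Your overall architecture is the standard one and matches how the result is actually established: the forward map via excising Darboux balls, well-definedness from Eliashberg's uniqueness of the tight ball with fixed boundary data and the contact isotopy of Darboux balls, Giroux's criterion forcing a connected dividing set on any convex sphere in a tight manifold, and cut-and-cap for surjectivity. Those portions are fine as stated.

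However, the two steps that carry essentially all of the content are left as gestures, and one of them is circular. First, tightness of the connected sum: your plan (perturb $S$ convex, intersect with a putative overtwisted disk $D$, remove innermost circles and outermost arcs) is precisely the naive approach that does not terminate without further input, as you yourself concede; what is missing is the actual mechanism --- in Colin's proof, elimination arguments with a strictly decreasing complexity controlling the characteristic foliation of $D$, or in later treatments a gluing/classification argument for the punctured pieces. Note also a technical misstep: the Legendrian realization principle applies to nonisolating collections of curves on the convex surface $S$, and circles of $D\cap S$ may well be isolating on $S$; moreover, even after Legendrian realization, surgering $D$ along an innermost disk preserves overtwistedness only if the relative twisting of $\xi$ along the circle is controlled, which is exactly the point your sketch does not address. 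Second, injectivity and the well-definedness of the cutting construction: you invoke, as ``standard,'' that any two convex separating spheres with connected dividing set are related by an ambient contact isotopy. That statement is not a routine consequence of convex surface theory --- it is the main theorem of the very paper being cited (the \emph{isotopies de sph\`eres} of Colin's title), so using it here makes the proposal circular as a self-contained proof. As a roadmap reducing the theorem to (i) tightness of the sum and (ii) Colin's sphere-isotopy theorem, your write-up is accurate; as a proof, both (i) and (ii) remain unproven.
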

Since we consider $S^3$ as the ambient space which has the unique contact structure $\xi_{std}$ up to contact isotopy, there is no ambiguity at all.
Therefore, we may define $L_1\#L_2$ in $(S^3,\xi_{std})$ by the connected sum of pairs as before for given two Legendrian knots $L_1, L_2$ in $(S^3,\xi_{std})$. Figure~\ref{fig:connectedsum} shows a pictorial definition of the connected sum of (Legendrian) knots.

\begin{figure}[ht]
\begingroup%
  \makeatletter%
  \providecommand\color[2][]{%
    \errmessage{(Inkscape) Color is used for the text in Inkscape, but the package 'color.sty' is not loaded}%
    \renewcommand\color[2][]{}%
  }%
  \providecommand\transparent[1]{%
    \errmessage{(Inkscape) Transparency is used (non-zero) for the text in Inkscape, but the package 'transparent.sty' is not loaded}%
    \renewcommand\transparent[1]{}%
  }%
  \providecommand\rotatebox[2]{#2}%
  \ifx\svgwidth\undefined%
    \setlength{\unitlength}{316.80050441bp}%
    \ifx\svgscale\undefined%
      \relax%
    \else%
      \setlength{\unitlength}{\unitlength * \real{\svgscale}}%
    \fi%
  \else%
    \setlength{\unitlength}{\svgwidth}%
  \fi%
  \global\let\svgwidth\undefined%
  \global\let\svgscale\undefined%
  \makeatother%
  \begin{picture}(1,0.12880565)%
    \put(0,0){\includegraphics[width=\unitlength]{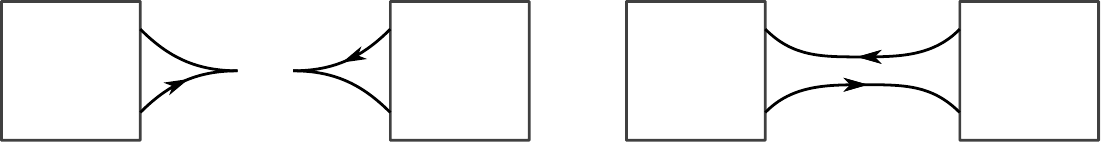}}%
    \put(0.05176915,0.05180243){\color[rgb]{0,0,0}\makebox(0,0)[lb]{\smash{$L_1$}}}%
    \put(0.40530394,0.04819494){\color[rgb]{0,0,0}\makebox(0,0)[lb]{\smash{$L_2$}}}%
    \put(0.22853654,0.05450806){\color[rgb]{0,0,0}\makebox(0,0)[lb]{\smash{$\#$}}}%
    \put(0.50901954,0.05090056){\color[rgb]{0,0,0}\makebox(0,0)[lb]{\smash{$=$}}}%
    \put(0.61995006,0.05180243){\color[rgb]{0,0,0}\makebox(0,0)[lb]{\smash{$L_1$}}}%
    \put(0.92297988,0.04819493){\color[rgb]{0,0,0}\makebox(0,0)[lb]{\smash{$L_2$}}}%
  \end{picture}%
\endgroup%
\caption{A pictorial definition of the connected sum}
\label{fig:connectedsum}
\end{figure}

For given two knots $K_1, K_2$ and Legendrian knots $L_i\in\mathcal{L}(K_i)$, Etnyre and Honda in \cite{EH2} showed not only well-definedness of $L_1\#L_2$ but also a complete description of the relation between $\mathcal{L}(K_1\#K_2)$ and $\mathcal{L}(K_i)$'s for knots in arbitrary 3-manifolds with tight contact structures. Here we introduce their results only for knots in $S^3$. We denote by $\mathbf{S}_n$ {\em the symmetric group on $\{1,\dots,n\}$}.

\begin{thm}\cite{EH2}\label{thm:connectedsum}
Let $K = K_1\#K_2\#\dots\#K_n$ be a prime decomposition of a knot $K$ in $S^3$. Then the map
$$
C:\left(
\mathcal{L}(K_1)\times\dots\times\mathcal{L}(K_n)/\sim
\right)\to\mathcal{L}(K_1\#\dots\#K_n)
$$
given by $(L_1,\dots,L_n)\mapsto L_1\#\dots\#L_n$ is a bijection. Here the equivalence relation $\sim$ is of two types:
\begin{enumerate}
\item $(L_1,\dots,S_{\pm}(L_i),L_{i+1},\dots,L_n)\sim(L_1,\dots,L_i,S_{\pm}(L_{i+1}),\dots,L_n)$,
\item $(L_1,\dots,L_n)\sim(L_{\sigma(1)},\dots,L_{\sigma(n)})$, where $\sigma\in\mathbf{S}_n$ such that $K_{\sigma(i)}$ is isotopic to $K_i$.
\end{enumerate}
\end{thm}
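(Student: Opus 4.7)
The plan is to prove the theorem using convex surface theory together with the uniqueness of tight contact structures on the $3$-ball. The underlying topological input is Schubert's uniqueness theorem for the prime decomposition in $S^3$, which provides an essentially unique system of $n-1$ pairwise disjoint decomposing $2$-spheres realizing $K = K_1\#\dots\#K_n$, each meeting the knot transversely in two points. The Legendrian refinement then hinges on promoting these spheres to convex surfaces and exploiting Giroux's tightness criterion: a convex $S^2$ in a tight contact $3$-manifold carries exactly one dividing curve, because any null-homotopic dividing component would produce an overtwisted disc.

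For well-definedness of $C$, equivalence (2) is immediate from the topological commutativity of the connected sum for isotopic prime summands. Equivalence (1) requires a short contact-topological argument: a stabilization of $L_i$ performed in a collar of the connecting sphere can be pushed across that sphere to realize a stabilization of $L_{i+1}$, which is directly verifiable from the front projection shown in Figure~\ref{fig:connectedsum}. For surjectivity, I would take $L\in\mathcal{L}(K_1\#\dots\#K_n)$, apply Schubert's theorem to produce the topological decomposing spheres, and then use the Legendrian realization principle to make each of them convex while keeping $L$ Legendrian throughout. Each such convex sphere has a single dividing curve meeting $L$ transversely in two points. Cutting along these spheres and capping off by the unique tight $3$-ball structure yields Legendrian knots $L_i\in\mathcal{L}(K_i)$ whose connected sum is Legendrian isotopic to $L$.

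For injectivity, suppose $(L_1,\dots,L_n)$ and $(L'_1,\dots,L'_n)$ give Legendrian isotopic connected sums. I would fix a Legendrian isotopy between them and follow the system of convex decomposing spheres along this isotopy. Generically the spheres remain convex with a single dividing curve; at finitely many times a bypass is attached, and the dividing set changes in a controlled way. Each such bypass moves exactly one of the two intersection points of $L$ with the sphere from one side to the other, and the cumulative effect on the cut-open Legendrian pieces is to transfer a single stabilization between adjacent summands, which is precisely the move generating equivalence (1). Permuting decomposing spheres among topologically isotopic prime factors then accounts for equivalence (2).

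The main obstacle will be the bypass bookkeeping in the injectivity step. One must argue that the only ambiguity in recovering the tuple $(L_1,\dots,L_n)$ from the connected sum is the single-stabilization transfer, and that no more subtle modification can appear as the convex decomposing sphere deforms. This requires a careful analysis of bypasses along $L$ via the classification of tight contact structures on $S^2\times[0,1]$ due to Honda, combined with the observation that, inside $(S^3,\xi_{std})$, bypasses on a separating sphere are tightly constrained by the tight ball structure on each side.
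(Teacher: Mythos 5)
This theorem is not proved in the paper at all: it is imported verbatim from Etnyre and Honda \cite{EH2}, so there is no internal argument to compare your proposal against, and the right benchmark is the original proof in \cite{EH2}. Measured that way, your outline follows essentially the same route as the original: Schubert's uniqueness of the prime decomposition, decomposing spheres made convex in the complement of the Legendrian representative, connectedness of the dividing set on a convex sphere in a tight manifold, Eliashberg's uniqueness of the tight contact structure on the ball (together with Colin's theorem, which this paper quotes just before the statement) to cut along the spheres and cap off, and, for injectivity, a discretization of the Legendrian isotopy into bypass attachments across the decomposing sphere, each transferring a single stabilization between adjacent summands --- exactly the move generating relation (1), with relation (2) accounted for by permuting spheres among isotopic prime factors.

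Two points deserve flagging. First, your justification of the dividing-set claim is garbled: on $S^2$ \emph{every} dividing curve is null-homotopic, and a convex sphere always carries at least one, so ``any null-homotopic component would produce an overtwisted disc'' cannot be the reason; the correct statement of Giroux's criterion for $S^2$ is that a convex sphere has a tight neighborhood if and only if its dividing set is connected, whence exactly one curve. Second, and more substantively, what you call ``bypass bookkeeping'' is not a loose end but the technical core of \cite{EH2}, and your sketch also silently assumes an earlier nontrivial step: that $L_1\#L_2$ is well defined on Legendrian isotopy classes at all, independent of the choice of balls and connecting arcs --- a fact this paper explicitly attributes to \cite{EH2} alongside the structure theorem. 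To close the injectivity step one must put $L$ in a normal form near the convex sphere and show, via the classification of tight contact structures on $S^2\times[0,1]$, that a bypass attachment in the presence of the two intersection points can alter the splitting only by the single-stabilization transfer. You correctly name these tools but execute neither step, so your proposal stands as a faithful outline of the known Etnyre--Honda proof rather than a complete argument.
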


The behavior of classical invariants under the connected sum is quite obvious as follows.
\begin{lem}\label{lem:classical}
Let $L_1, L_2$ be two Legendrian knots. Then
$$
tb(L_1\#L_2) = tb(L_1)+tb(L_2)+1,\quad
r(L_1\#L_2) = r(L_1) + r(L_2).
$$
\end{lem}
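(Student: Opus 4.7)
The plan is to read both invariants off an explicit front projection of $L_1 \# L_2$ constructed as in Figure~\ref{fig:connectedsum}, and compare with the corresponding quantities for $L_1$ and $L_2$ individually. I will use the standard front-projection formulas
$$tb(L) = w(F) - \tfrac{1}{2}c(F),\qquad r(L) = \tfrac{1}{2}\bigl(D(F) - U(F)\bigr),$$
where $w(F)$, $c(F)$, $D(F)$, $U(F)$ are respectively the writhe, the total number of cusps, and the numbers of down and up cusps of the front projection $F$.

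First I will fix front projections $F_i$ of $L_i$, arranged (after a Legendrian isotopy if necessary) so that $F_1$ has a right cusp as its rightmost feature and $F_2$ has a left cusp as its leftmost feature. Deleting small neighborhoods of these two cusps and joining the four resulting endpoints by two parallel horizontal strands, exactly as in Figure~\ref{fig:connectedsum}, produces a front projection $F$ of $L_1 \# L_2$. The connecting strands contribute no crossings and no cusps, so $w(F) = w(F_1) + w(F_2)$ and $c(F) = c(F_1) + c(F_2) - 2$. Substituting into the formula for $tb$ immediately yields $tb(L_1\#L_2) = tb(L_1) + tb(L_2) + 1$.

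For the rotation number, the remaining task is orientation bookkeeping. Since the two connecting horizontal strands carry opposite orientations (one left-to-right, the other right-to-left), a short case analysis at the four glued endpoints forces the removed right cusp of $F_1$ and the removed left cusp of $F_2$ to be of opposite type: one up and one down. Hence $D(F) - U(F) = \bigl(D(F_1) - U(F_1)\bigr) + \bigl(D(F_2) - U(F_2)\bigr)$, which gives $r(L_1 \# L_2) = r(L_1) + r(L_2)$.

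The only content-bearing step is the cusp-type bookkeeping; the writhe and total cusp counts are transparent from the picture. I do not anticipate a serious obstacle beyond carefully tracking the up/down cusp convention so that the ``one up, one down'' assertion is unambiguous, and the fact that the final result is independent of the specific choice of cusps is guaranteed by the well-definedness of the Legendrian connected sum recorded in Theorem~\ref{thm:connectedsum}.
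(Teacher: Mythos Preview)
Your argument is correct and is the standard front-projection proof of this well-known identity. Note, however, that the paper does not actually supply a proof of Lemma~\ref{lem:classical}; it is stated without proof as a known fact (cf.\ \cite{EH2}), so there is nothing in the paper to compare your approach against. Your computation of $tb$ from the writhe and cusp counts is immediate, and your orientation bookkeeping for $r$ is right: once the two horizontal connecting strands inherit opposite orientations, the removed right cusp of $F_1$ and the removed left cusp of $F_2$ are forced to be of opposite up/down type, so $D-U$ is additive as claimed.
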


For a topological space $X$, a {\em symmetric product} $\operatorname{Sym}^n(X)$ of $X$ is defined by the quotient $\prod^n X/\mathbf{S}_n$ under the obvious action of the symmetric group.
We denote an equivalent class of $(x_1,\dots,x_n)$ (or a set with repetition) in $\operatorname{Sym}^n(X)$ by $[x_1,\dots,x_n]$.
Then the direct consequences of the above theorem are as follows.
\begin{cor}\label{cor:peaks1}
For a prime $K$, then 
$$
C:\operatorname{Sym}^n(Peak(\mathcal{L}(K))) \to Peak(\mathcal{L}(\#^n K))
$$
is bijective.
\end{cor}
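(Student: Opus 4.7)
The plan is to translate the peak condition through the bijection of Theorem~\ref{thm:connectedsum}. Specializing that theorem with all $K_i=K$ (prime), the equivalence relation $\sim$ consists of the stabilization moves of type (1) together with the full $\mathbf{S}_n$-action of type (2) on $\mathcal{L}(K)^n$. The strategy is to show that on tuples of peaks the stabilization relation is vacuous, and that $C$ carries such tuples bijectively onto $Peak(\mathcal{L}(\#^n K))$.

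First I would argue that if every coordinate $L_i$ lies in $Peak(\mathcal{L}(K))$, then the equivalence class of $(L_1,\dots,L_n)$ collapses to its $\mathbf{S}_n$-orbit. Indeed, any instance of relation (1) exhibits two of its coordinates as stabilizations $S_\pm(\cdot)$; since a peak is, by definition, not a stabilization of anything, the move of type (1) can never be applied to a tuple of peaks. Thus the restriction of the domain of $C$ to tuples of peaks is precisely $\operatorname{Sym}^n(Peak(\mathcal{L}(K)))$, and the injectivity of $C$ on this subset is inherited from Theorem~\ref{thm:connectedsum}.

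Next I would show the image of $C$ restricted to $\operatorname{Sym}^n(Peak(\mathcal{L}(K)))$ is exactly $Peak(\mathcal{L}(\#^n K))$. For the forward inclusion, suppose each $L_i$ is a peak and that $L_1\#\cdots\#L_n=S_\pm(L')$ for some $L'\in\mathcal{L}(\#^n K)$; by Theorem~\ref{thm:connectedsum} write $L'=L_1'\#\cdots\#L_n'$, so that $S_\pm(L_1')\#L_2'\#\cdots\#L_n'$ represents $L_1\#\cdots\#L_n$. The bijection then forces $(L_1,\dots,L_n)\sim(S_\pm(L_1'),L_2',\dots,L_n')$, and after applying permutations and at most one move of type (1) some $L_i$ must itself be of the form $S_\pm(\cdot)$, contradicting $L_i\in Peak(\mathcal{L}(K))$. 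For the reverse inclusion, take $L\in Peak(\mathcal{L}(\#^n K))$, write $L=L_1\#\cdots\#L_n$ via $C^{-1}$, and observe that if some $L_i=S_\pm(L_i')$ then $L=S_\pm(L_1\#\cdots\#L_i'\#\cdots\#L_n)$, contradicting $L\in Peak$.

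The main obstacle is ensuring that the stabilization move on the connected sum really does correspond to a stabilization in a single factor; this is exactly the content of relation (1) in Theorem~\ref{thm:connectedsum}, which says that a stabilization of $L_1\#\cdots\#L_n$ can always be realized by stabilizing any one chosen factor. Once this is in hand, both directions of the peak correspondence reduce to a one-line manipulation of tuples, and bijectivity of $C$ on the peak level follows immediately from bijectivity of $C$ on the level of all tuples.
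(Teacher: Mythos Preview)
Your proposal is correct and follows essentially the same approach as the paper: both rely on the observation that relation (1) is inapplicable to tuples of peaks (since peaks are not stabilizations), reducing the equivalence on such tuples to the $\mathbf{S}_n$-action, and then identify the image with $Peak(\mathcal{L}(\#^n K))$. The paper's proof is terser, dispatching the image statement in one line ``by definition of the connected sum,'' whereas you spell out both inclusions explicitly; your version is a faithful expansion of the same argument.
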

\begin{proof}
It is obvious that type (2) equivalence relation is always applicable for any $\sigma\in\mathbf{S}_n$ but type (1) is never applicable on $\prod^n Peak(\mathcal{L}(K))$ because any element in $Peak(\mathcal{L}(K))$ can not be destabilized. Hence $C$ is well-defined on $\operatorname{Sym}^n(Peak(\mathcal{L}(K)))$. Moreover, by definition of the connected sum, $C$ maps bijectively onto $Peak(\mathcal{L}(\#^nK))$.
\end{proof}

\begin{cor}\label{cor:peaks2}
For two relatively prime knots $K_1$ and $K_2$,
$$
C:Peak(\mathcal{L}(K_1))\times Peak(\mathcal{L}(K_n))\to Peak(\mathcal{L}(K_1\#K_2))
$$
is bijective.
\end{cor}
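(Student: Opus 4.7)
My plan is to reduce the claim to an application of Theorem \ref{thm:connectedsum} applied to the \emph{full} prime decomposition of $K_1\#K_2$, using relative primality to split the resulting symmetric-group action. First, I would write the prime decompositions $K_1=(\#^{a_1}P_1)\#\cdots\#(\#^{a_n}P_n)$ and $K_2=(\#^{b_1}Q_1)\#\cdots\#(\#^{b_m}Q_m)$. Relative primality gives $\{P_i\}\cap\{Q_j\}=\emptyset$, so the concatenation is the prime decomposition of $K_1\#K_2$, and Theorem \ref{thm:connectedsum} identifies $\mathcal{L}(K_1\#K_2)$ with the quotient of $\prod_i\mathcal{L}(P_i)^{a_i}\times\prod_j\mathcal{L}(Q_j)^{b_j}$ by the relations $\sim$ of types (1) and (2).

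Next, I would characterize which tuple classes represent a peak of $\mathcal{L}(K_1\#K_2)$: a class $[L_1,\dots,L_N]$ is a peak if and only if every coordinate $L_k$ is itself a peak of the corresponding $\mathcal{L}(P_i)$ or $\mathcal{L}(Q_j)$. The ``only if'' direction follows as in the proof of Corollary \ref{cor:peaks1}: if some $L_k=S_\pm(L'_k)$, then the tuple obtained by replacing $L_k$ with $L'_k$ maps under $C$ to a Legendrian whose $S_\pm$-stabilization is $C(L_1,\dots,L_N)$. The ``if'' direction is subtler: type (1) moves are simply not applicable to a tuple with no stabilized coordinate, while type (2) moves preserve the all-peak property, so the equivalence class of an all-peak tuple consists only of all-peak tuples, and hence is never equal to the class of a tuple produced by a destabilization.

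Finally, since the prime-factor sets $\{P_i\}$ and $\{Q_j\}$ are disjoint, the type (2) permutations of $\mathbf{S}_{a_1+\dots+a_n+b_1+\dots+b_m}$ that act on all-peak tuples decouple into permutations among the $K_1$-coordinates and among the $K_2$-coordinates. Iterating Corollary \ref{cor:peaks1} on each prime block then yields
\[
Peak(\mathcal{L}(K_1\#K_2))\cong\prod_i\operatorname{Sym}^{a_i}(Peak(\mathcal{L}(P_i)))\times\prod_j\operatorname{Sym}^{b_j}(Peak(\mathcal{L}(Q_j)))\cong Peak(\mathcal{L}(K_1))\times Peak(\mathcal{L}(K_2)),
\]
and one checks that this identification is exactly the map induced by $C$. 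The main obstacle I expect is the ``if'' half of the peak characterization, since it requires verifying that no chain of type (1) and type (2) moves can convert an all-peak tuple into one with a stabilized coordinate; once that is in hand, the remainder is bookkeeping that relies precisely on the disjointness of the two prime-factor sets provided by relative primality.
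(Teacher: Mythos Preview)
Your proposal is correct and follows essentially the same reasoning as the paper: type~(1) moves are inapplicable on all-peak tuples, and relative primality forces the type~(2) permutations to decouple between the $K_1$- and $K_2$-factors. The paper's proof is simply the two-sentence summary of these same observations, treating $K_1$ and $K_2$ as atomic summands rather than explicitly passing to the full prime decomposition as you do; your version is more careful on this point but not different in substance.
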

\begin{proof}
Since $K_1$ and $K_2$ are relatively prime, the equivalence relation $\sim$ does not change two summand. Moreover, by the same reason as above, type (1) equivalence relation is not applicable either.
\end{proof}
%

We analyze the equivalence relation $\sim$ of type (1) in more combinatorial ways.
Let $K$ be given. A {\em path $\gamma$} is a word $S_{\epsilon_k}^{\eta_k}\dots S_{\epsilon_1}^{\eta_1}$ of $\{S_0,S_+^{\pm1},S_-^{\pm1}\}$. 
We say that $\gamma$ {\em realizes} a sequence $(L_0, L_1,\dots, L_k)$ in $\mathcal{L}(K)$ if it satisfies the following:
\begin{enumerate}
\item $L_i=L_{i-1}$ if $\epsilon_i=0$;
\item $L_i = S_{\epsilon_i}(L_{i-1})$ if $\epsilon\neq 0, \eta_i=1$; and 
\item $L_{i-1} = S_{\epsilon_i}(L_i)$ if $\epsilon\neq 0, \eta_i=-1$.
\end{enumerate}

A path $\gamma$ is {\em realizable} at $L\in\mathcal{L}(K)$ if $\gamma$ realizes at least 1 sequence $(L_0,\dots,L_k)$ with $L=L_0$.
We denote by $\gamma(L)$ the set consisting of all possible ends of sequences that $\gamma$ realizes. By definition, $\gamma(L)\neq\emptyset$ if and only if $\gamma$ is realizable at $L$.
%
Then it is obvious to check that if two paths $\gamma_1$ and $\gamma_2$ are realizable at $L$ and $L'\in \gamma_1(L)$, respectively, then the concatenation $\gamma_2\cdot\gamma_1$ is also realizable at $L$.

We define the {\em reverse $\bar\gamma=S_{\epsilon_k}^{-\eta_k}\dots S_{\epsilon_1}^{-\eta_1}$} of $\gamma$ 
by changing all exponents. Note that this is different from the usual inverse. 
Geometrically, for any mountain ranges that $\gamma$ and $\bar\gamma$ are realized, $\bar\gamma$ goes down or right if $\gamma$ goes up or left, respectively, and {\em vice versa} because all exponents are reversed. See Figure~\ref{fig:paths} for example.
This operation plays an important role for describing the equivalence relation $\sim$ as follows.

\begin{figure}[ht]
\begingroup%
  \makeatletter%
  \providecommand\color[2][]{%
    \errmessage{(Inkscape) Color is used for the text in Inkscape, but the package 'color.sty' is not loaded}%
    \renewcommand\color[2][]{}%
  }%
  \providecommand\transparent[1]{%
    \errmessage{(Inkscape) Transparency is used (non-zero) for the text in Inkscape, but the package 'transparent.sty' is not loaded}%
    \renewcommand\transparent[1]{}%
  }%
  \providecommand\rotatebox[2]{#2}%
  \ifx\svgwidth\undefined%
    \setlength{\unitlength}{272.34291396bp}%
    \ifx\svgscale\undefined%
      \relax%
    \else%
      \setlength{\unitlength}{\unitlength * \real{\svgscale}}%
    \fi%
  \else%
    \setlength{\unitlength}{\svgwidth}%
  \fi%
  \global\let\svgwidth\undefined%
  \global\let\svgscale\undefined%
  \makeatother%
  \begin{picture}(1,0.1883228)%
    \put(0,0){\includegraphics[width=\unitlength]{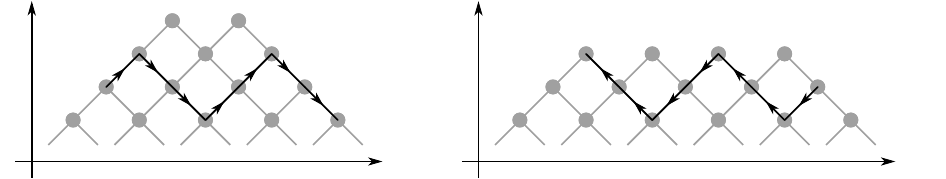}}%
    \put(-0.00145255,0.1750277){\color[rgb]{0,0,0}\makebox(0,0)[lb]{\smash{$tb$}}}%
    \put(0.40977289,0.01753712){\color[rgb]{0,0,0}\makebox(0,0)[lb]{\smash{$r$}}}%
    \put(0.47101922,0.1750277){\color[rgb]{0,0,0}\makebox(0,0)[lb]{\smash{$tb$}}}%
    \put(0.95224044,0.01753712){\color[rgb]{0,0,0}\makebox(0,0)[lb]{\smash{$r$}}}%
    \put(0.10354118,0.12253081){\color[rgb]{0,0,0}\makebox(0,0)[lb]{\smash{$\gamma$}}}%
    \put(0.87162022,0.10565683){\color[rgb]{0,0,0}\makebox(0,0)[lb]{\smash{$\bar\gamma$}}}%
  \end{picture}%
\endgroup%
\caption{A path $\gamma=S_+^2S_-^{-2}S_+^2S_-^{-1}$ and its reverse $\bar\gamma=S_+^{-2}S_-^2S_+^{-2}S_-$}
\label{fig:paths}
\end{figure}

\begin{lem}\label{lem:paths1}
Let $K$ be a prime knot. Then $L_1\#\dots\#L_n=L_1'\#\dots\#L_n'$ in $\mathcal{L}(\#^n K)$ if and only if there exists a set $\{\gamma_1,\dots,\gamma_n\}$ of $n$ paths of length $k$ such that
\begin{enumerate}
\item there exists a permutation $\sigma\in\mathbf{S}_n$ such that $L_{\sigma(i)}'\in \gamma_i(L_i)$,
\item for each $\ell\le k$, $\ell$-th words of $\gamma_i$'s are either 
$[S_+, S_+^{-1}, S_0,\dots,S_0]$ or 
$[S_-, S_-^{-1}, S_0,\dots,S_0]$.
\end{enumerate}
\end{lem}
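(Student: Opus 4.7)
The plan is to prove both directions using Theorem~\ref{thm:connectedsum} after carefully reorganizing the equivalence chain that it provides.

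For the ``if'' direction, I would run the $n$ paths simultaneously starting from $(L_1,\dots,L_n)$. At the $\ell$-th step, condition (2) says exactly one path performs $S_{\epsilon_\ell}$ (at some position $j_\ell$), exactly one performs $S_{\epsilon_\ell}^{-1}$ (at some position $i_\ell$), and the rest act trivially; hence the resulting tuple differs from the previous one only in positions $i_\ell,j_\ell$, and the two tuples are related by a single type (1) equivalence of Theorem~\ref{thm:connectedsum}, conjugated by a permutation when $i_\ell,j_\ell$ are non-adjacent. After $k$ steps the final tuple is $(L'_{\sigma(1)},\dots,L'_{\sigma(n)})$ by condition (1), which is equivalent to $(L'_1,\dots,L'_n)$ by a type (2) move, so the two connected sums agree.

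For the ``only if'' direction, Theorem~\ref{thm:connectedsum} yields a finite chain of elementary type (1) and type (2) equivalences between $(L_1,\dots,L_n)$ and $(L'_1,\dots,L'_n)$. The key step is to normalize this chain so that all permutations are concentrated at the very end. For this, I would verify that any subchain ``$\tau$ followed by a type (1) move at positions $(p,p+1)$ with sign $\epsilon$'' can be rewritten as ``a \emph{generalized} type (1) move at positions $(\tau(p),\tau(p+1))$ with sign $\epsilon$, followed by $\tau$'', where ``generalized'' only means that the two affected positions need not be adjacent; this generalization is harmless in the equivalence closure, since it factors as transpositions together with an honest adjacent type (1) move. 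Iterating pushes every permutation past every type (1) move, leaving a chain of $k$ generalized type (1) moves followed by a single permutation $\sigma$.

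The paths are then read off directly from the normalized chain: labelling the $\ell$-th generalized type (1) move by its triple $(i_\ell,j_\ell,\epsilon_\ell)$, I would define the $\ell$-th letter of $\gamma_i$ to be $S_{\epsilon_\ell}^{-1}$ if $i=i_\ell$, $S_{\epsilon_\ell}$ if $i=j_\ell$, and $S_0$ otherwise. By construction the $\gamma_i$ share the common length $k$, satisfy condition (2), are realizable at the respective $L_i$ (because they arise from an actual sequence of Legendrian tuples), and produce the tuple $(L'_{\sigma(1)},\dots,L'_{\sigma(n)})$, giving condition (1). The main obstacle will be the normalization step: it requires care with the convention for how $\mathbf{S}_n$ acts on tuples, and with confirming that each destabilization along the chain is actually realizable, but once these bookkeeping points are settled the paths fall out immediately.
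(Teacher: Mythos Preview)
Your argument is correct and complete; both directions go through as you outline, and the normalization step is sound once the commutation of a permutation past a type~(1) move is checked (which is immediate).

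The paper, however, organizes the ``only if'' direction differently. Rather than pushing permutations to the end of the chain algebraically, it packages the entire equivalence chain as a simplicial path $\Gamma:I\to\operatorname{Sym}^n(\mathcal{L}(K))$ into the symmetric product, then lifts $\Gamma$ to a map $\tilde\Gamma:\tilde I\to\mathcal{L}(K)$ from an $n$-fold \emph{branched} simplicial cover $\tilde I\to I$; the type~(2) permutations become the branch points of $\tilde I$. Resolving the branch locus turns $\tilde I$ into $n$ disjoint intervals, and the restrictions of $\tilde\Gamma$ to these intervals are precisely the paths $\gamma_i$. Your approach is more elementary and entirely self-contained, trading a geometric picture for explicit bookkeeping; the paper's approach is shorter and connects the lemma to the standard description of maps into symmetric products, making the origin of the permutation $\sigma$ (as the monodromy of the resolved cover) conceptually transparent. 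Both yield the same paths.
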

\begin{proof}
If two elements are equivalent in $\prod^n\mathcal{L}(K)$, then there is a sequence of the equivalence relations of type (1) and (2), which gives a simplicial path $\Gamma:I=[0,k]\to\operatorname{Sym}^n(\mathcal{L}(K))$ joining $[L_1,\dots,L_n]$ and $[L_1',\dots,L_n']$.
Moreover, a map $\Gamma$ can be interpreted as a map $\tilde\Gamma:\tilde I\to \mathcal{L}(K)$ where $\pi:\tilde I\to I$ is an $n$-fold branched simplicial covering map. Note that $\tilde\Gamma$ recovers $\Gamma$ by regarding $\pi^{-1}(t)$ as a set with repetition and sending this set via $\tilde\Gamma$.
This is a standard description for maps into the symmetric product spaces. See Figure~\ref{fig:covering} for an example.

We can resolve the branch locus of $\tilde I$ to obtain $n$ disjoint intervals $nI$ and $n$ paths $\{\gamma_i\}$ of length $k$ in $\mathcal{L}(K)$ such that each joins some of $L_i$ and $L_j'$.
Note that this process is not unique, but the resulting paths satisfy the condition above by definition of the connected sum. Therefore the existence of paths is the same as the equivalence with respect to $\sim$.
\end{proof}

\begin{figure}[ht]
\begingroup%
  \makeatletter%
  \providecommand\color[2][]{%
    \errmessage{(Inkscape) Color is used for the text in Inkscape, but the package 'color.sty' is not loaded}%
    \renewcommand\color[2][]{}%
  }%
  \providecommand\transparent[1]{%
    \errmessage{(Inkscape) Transparency is used (non-zero) for the text in Inkscape, but the package 'transparent.sty' is not loaded}%
    \renewcommand\transparent[1]{}%
  }%
  \providecommand\rotatebox[2]{#2}%
  \ifx\svgwidth\undefined%
    \setlength{\unitlength}{229.98046875bp}%
    \ifx\svgscale\undefined%
      \relax%
    \else%
      \setlength{\unitlength}{\unitlength * \real{\svgscale}}%
    \fi%
  \else%
    \setlength{\unitlength}{\svgwidth}%
  \fi%
  \global\let\svgwidth\undefined%
  \global\let\svgscale\undefined%
  \makeatother%
  \begin{picture}(1,0.49569426)%
    \put(0,0){\includegraphics[width=\unitlength]{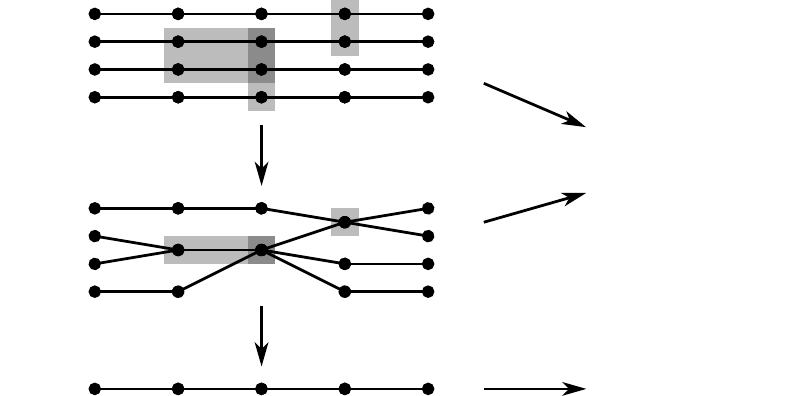}}%
    \put(0.77978769,0.00869638){\color[rgb]{0,0,0}\makebox(0,0)[lb]{\smash{$Sym^n(X)$}}}%
    \put(0.77978769,0.2695881){\color[rgb]{0,0,0}\makebox(0,0)[lb]{\smash{$X$}}}%
    \put(0.01450531,0.00869638){\color[rgb]{0,0,0}\makebox(0,0)[lb]{\smash{$I=$}}}%
    \put(0.01450531,0.18262419){\color[rgb]{0,0,0}\makebox(0,0)[lb]{\smash{$\tilde I =$}}}%
    \put(-0.00288747,0.42612313){\color[rgb]{0,0,0}\makebox(0,0)[lb]{\smash{$nI=$}}}%
    \put(0.65803822,0.02608916){\color[rgb]{0,0,0}\makebox(0,0)[lb]{\smash{$\Gamma$}}}%
    \put(0.65803822,0.18262419){\color[rgb]{0,0,0}\makebox(0,0)[lb]{\smash{$\tilde\Gamma$}}}%
    \put(0.6375396,0.38636822){\color[rgb]{0,0,0}\makebox(0,0)[lb]{\smash{$\{\gamma_i\}$}}}%
  \end{picture}%
\endgroup%
\caption{Multi-paths $\{\gamma_i\}$ describing a path $\Gamma:I\to\operatorname{Sym}^n(X)$ space.}
\label{fig:covering}
\end{figure}

\begin{lem}\label{lem:paths2}
Let $K_1, K_2$ be relatively prime knots, and $L_i, L_i'\in\mathcal{L}(K_i)$ for each $i=1,2$.
Then $L_1\#L_2=L_1'\#L_2'$ in $\mathcal{L}(K_1\#K_2)$ if and only if there exists a path $\gamma$ such that $L_1'\in\gamma(L_1)$ and $L_2'\in\bar\gamma(L_2)$.

In particular, we can conclude that $L_1\#L_2$ is different from $L_1'\#L_2'$ if there is no such path.
\end{lem}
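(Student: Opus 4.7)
The plan is to specialize Lemma \ref{lem:paths1} to $n=2$, using the fact that relative primality collapses the permutation equivalence. By Theorem \ref{thm:connectedsum}, the equality $L_1\#L_2 = L_1'\#L_2'$ is equivalent to $(L_1,L_2) \sim (L_1',L_2')$ in $\mathcal{L}(K_1)\times \mathcal{L}(K_2)/\sim$. Since $K_1$ and $K_2$ are relatively prime and therefore not isotopic, the only $\sigma\in \mathbf{S}_2$ respecting isotopy classes is the identity, so type (2) moves are trivial and any equivalence is realized by a finite chain of type (1) moves $(S_\epsilon M_1, M_2)\leftrightarrow (M_1, S_\epsilon M_2)$.

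For the forward implication, I would fix such a chain of length $k$ joining $(L_1,L_2)$ to $(L_1',L_2')$ and record at each step $\ell$ a pair $(\epsilon_\ell,\eta_\ell)$ with $\epsilon_\ell\in\{+,-\}$ and $\eta_\ell=\pm 1$, where $S_{\epsilon_\ell}^{\eta_\ell}$ is the operation on the first coordinate. The shape of the move forces the second coordinate to undergo $S_{\epsilon_\ell}^{-\eta_\ell}$ simultaneously. The first-coordinate letters assemble, in order, into a path $\gamma = S_{\epsilon_k}^{\eta_k}\cdots S_{\epsilon_1}^{\eta_1}$ realizable at $L_1$ with $L_1'\in\gamma(L_1)$, while the second-coordinate letters assemble, by the very definition of exponent-reversal, into $\bar\gamma$ realizable at $L_2$ with $L_2'\in\bar\gamma(L_2)$.

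The converse is the same bookkeeping read backwards. Given $\gamma$ realizable at $L_1$ with endpoint $L_1'$ and $\bar\gamma$ realizable at $L_2$ with endpoint $L_2'$, I would process the letters of $\gamma$ from right to left, applying at step $\ell$ the unique type (1) move that performs $S_{\epsilon_\ell}^{\eta_\ell}$ on the first coordinate; realizability of $\gamma$ makes this legal on the first side, and realizability of $\bar\gamma$ guarantees that the forced companion operation $S_{\epsilon_\ell}^{-\eta_\ell}$ on the second side is also legal. After $k$ steps the chain terminates at $(L_1',L_2')$. The final sentence of the lemma is the contrapositive of this direction.

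The main conceptual obstacle is not difficulty but a point that is easy to get wrong: one must carefully distinguish $\bar\gamma$ (exponent-reversal in the same left-to-right order) from the formal inverse word $\gamma^{-1}$, and verify that it is precisely this exponent-reversal that encodes the ``stabilize one summand while destabilizing the other'' trade defining a type (1) move. Once this dictionary between chains of type (1) moves and pairs $(\gamma,\bar\gamma)$ is made explicit, both implications reduce to a direct translation.
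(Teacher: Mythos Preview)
Your argument is correct and follows the same approach as the paper: both observe that relative primality kills the type~(2) permutation equivalence, so only type~(1) moves remain, and then translate a chain of type~(1) moves into a pair $(\gamma,\bar\gamma)$ (and back). The paper's proof is terser---it simply remarks that each type~(1) generator is a length-one path satisfying the condition and that realizable paths concatenate---while you spell out both implications and the exponent-reversal bookkeeping explicitly; but the content is the same. One small quibble: your opening line about ``specializing Lemma~\ref{lem:paths1}'' is slightly misleading, since that lemma treats $\#^n K$ for a single prime $K$, whereas here you (correctly) work directly from Theorem~\ref{thm:connectedsum} with two distinct factors.
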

\begin{proof}
Since $K_1$ and $K_2$ are relatively prime, the equivalence relation by permuting components is never applicable. Hence we only need to consider the first type of equivalence relation in Theorem~\ref{thm:connectedsum}.

However, the generator for the equivalence relation of the first type obviously defines a path of length 1 which satisfies the assumption. Moreover, realizable paths are closed under concatenations whenever their ends match, and therefore the lemma follows.
\end{proof}

\section{Main results}

To prove Theorem~\ref{thm:criterion} we will show the following propositions.

\begin{prop}\label{prop:simple1}
Let $K$ be a prime knot. If $\#^n K$ is Legendrian simple, then so is $K$.
\end{prop}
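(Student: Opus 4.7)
The approach is to prove the contrapositive: if $K$ is not Legendrian simple, then neither is $\#^n K$. The strategy is to build two connected sums in $\mathcal{L}(\#^n K)$ that have matching classical invariants but are distinguished by whether a specific peak of $\mathcal{L}(\#^n K)$ lies above them, with the peak supplied by the bijection of Corollary~\ref{cor:peaks1}.

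I first introduce the \emph{peak-ancestor set}: for $L \in \mathcal{L}(K)$, set
$$
\mathcal{A}(L) := \{ P \in Peak(\mathcal{L}(K)) : L = S_+^a S_-^b P \text{ for some } a, b \ge 0 \}.
$$
Theorem~\ref{thm:Benn} ensures $\mathcal{A}(L) \ne \emptyset$ for every $L$. The key observation is that if $L \ne L'$ share the same $\Phi$-value, then $\mathcal{A}(L) \cap \mathcal{A}(L') = \emptyset$: a common $P$ would force $(a, b)$ to be the pair uniquely determined by $\Phi(P) - \Phi(L) = \Phi(P) - \Phi(L')$, whence $L = S_+^a S_-^b P = L'$.

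Assuming $K$ is not Legendrian simple, I apply Lemma~\ref{lem:nmax} at a maximal nonsimple point $N_{max}$ to obtain two distinct $L_1, L_2 \in \Phi^{-1}(N_{max})$; in case~(1) take $L_1 = P$ a peak at $N_{max}$ and $L_2 \ne P$, and in case~(2) take $L_1 = V_1, L_2 = V_2$. By the disjointness observation I can pick $Q \in \mathcal{A}(L_1) \setminus \mathcal{A}(L_2)$ (e.g.\ $Q = P$ in case~(1), any $Q \in \mathcal{A}(V_1)$ in case~(2)). Then consider
$$
L_1 \# \#^{n-1} Q \quad \text{and} \quad L_2 \# \#^{n-1} Q
$$
in $\mathcal{L}(\#^n K)$; they share their classical invariants by Lemma~\ref{lem:classical}. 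I will show that the peak $\#^n Q \in Peak(\mathcal{L}(\#^n K))$ provided by Corollary~\ref{cor:peaks1} sits above the first but not the second. For the first, writing $L_1 = S_+^a S_-^b Q$ and redistributing stabilizations via the type~(1) equivalence of Theorem~\ref{thm:connectedsum} gives $L_1 \# \#^{n-1} Q = S_+^a S_-^b(\#^n Q)$, so $\#^n Q$ is an ancestor. For the second, any such presentation would, after redistribution, produce nonnegative exponents $(a_i, b_i)_{i=1}^n$ with $\{S_+^{a_i} S_-^{b_i} Q\}_i = \{L_2, Q, \ldots, Q\}$ as multisets: the $n-1$ slots yielding $Q$ force $a_i = b_i = 0$ since stabilization strictly decreases $tb$, and the remaining slot forces $Q \in \mathcal{A}(L_2)$, contradicting our choice of $Q$.

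Hence the two connected sums above are distinct Legendrian knots sharing classical invariants, so $\#^n K$ is not Legendrian simple, completing the contrapositive. The main obstacle I anticipate is the rigorous justification of the peak-ancestor analysis in $\mathcal{L}(\#^n K)$, especially the stabilization-redistribution step that invokes Theorem~\ref{thm:connectedsum} (and, if preferred, the path formulation of Lemma~\ref{lem:paths1}); once that step is secure, the remainder of the argument is bookkeeping from the definitions of $\mathcal{A}$ and $N_{max}$.
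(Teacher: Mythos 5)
Your overall architecture matches the paper's: prove the contrapositive, invoke Lemma~\ref{lem:nmax} at a maximal nonsimple point $N_{max}$, and compare two connected sums $L_1\#(\#^{n-1}Q)$ and $L_2\#(\#^{n-1}Q)$ that share classical invariants by Lemma~\ref{lem:classical}; your case~(1) goes through essentially as in the paper (there the peak $\#^nQ$ argument reduces to Corollary~\ref{cor:peaks1}, which is rigorous). The genuine gap is the ``stabilization-redistribution'' step you use for case~(2) of Lemma~\ref{lem:nmax}, and you are right to flag it as the main obstacle: it is not a deferred technicality but exactly the hard point of the proposition, and as stated it is false. Equality $L_2\#(\#^{n-1}Q)=S_+^aS_-^b(\#^nQ)$ means, via the bijection of Theorem~\ref{thm:connectedsum}, only that the corresponding tuples are \emph{equivalent} under $\sim$, and the generating type~(1) moves may destabilize a factor along \emph{any} parent, not only back toward $Q$. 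Since elements below a valley lie under several peaks, an equivalence chain can leave the cone $\{S_+^aS_-^bQ : a,b\ge 0\}$ entirely. Concretely, in a mountain range with two peaks $P_1,P_2$ and valley $V=S_+(P_1)=S_-(P_2)$, one has $[S_+^2P_1,\,P_1]\sim[S_+P_2,\,S_-P_1]$: destabilize $S_+^2P_1=S_-(S_+P_2)$ along the other branch and move that $S_-$ to the second factor; here $S_+P_2$ is not of the form $S_+^{a}S_-^{b}P_1$. So equivalent presentations of $S_+^aS_-^b(\#^nQ)$ need not have multiset $\{S_+^{a_i}S_-^{b_i}Q\}_{i}$, and the deduction ``the remaining slot forces $Q\in\mathcal{A}(L_2)$'' is unsupported. (Your preliminary observations are fine: $\mathcal{A}(L)\neq\emptyset$ by Theorem~\ref{thm:Benn}, and $\mathcal{A}(L)\cap\mathcal{A}(L')=\emptyset$ when $L\neq L'$ and $\Phi(L)=\Phi(L')$, since $(a,b)$ is determined by $\Phi(P)-\Phi(L)$ and stabilizations are deterministic and commute.)

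The paper closes exactly this gap with Lemma~\ref{lem:paths1} together with three ingredients absent from your argument. First, the background summand is chosen to be a \emph{maximal} peak $L$ of $\mathcal{L}(K)$, not merely some peak-ancestor of $L_1$: then, since the total $tb$ of the $n$ paths is conserved at each step and each path is bounded above by $tb(L)$, every path $\gamma_i$ is confined to the levels $tb\ge tb(N_{max})$. Second, all points strictly above $N_{max}$ are simple, so $\mathcal{L}(K)$ and $\mathcal{M}(K)$ may be identified there, converting path data into plane geometry. Third, the valley condition at $N_{max}$ places the parents of the two preimages $L_1$ and $L_1'$ in different connected components of the region above $N_{max}$, so no admissible system of paths can realize the equivalence. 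Note that your $Q$ is an arbitrary element of $\mathcal{A}(V_1)$ and need not be maximal, so even this confinement argument is unavailable as written: a path could compensate a dip below the level of $N_{max}$ (where nonsimple identifications live) by climbing a higher peak elsewhere. To repair the proof, replace $\#^{n-1}Q$ by $\#^{n-1}L$ with $L$ maximal --- at which point the ancestor bookkeeping is no longer needed --- and run the path-confinement and separation argument above in place of the redistribution claim.
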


\begin{prop}\label{prop:simple2}
Let $K_1, K_2$ be two relatively prime knots.
If $K_1\# K_2$ is Legendrian simple, then so are $K_1$ and $K_2$.
\end{prop}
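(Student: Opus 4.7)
I would prove the contrapositive: assume $K_1$ is not Legendrian simple, and produce two distinct Legendrian classes in $\mathcal{L}(K_1 \# K_2)$ with the same classical invariants. The non-simplicity of $K_2$ side then follows by symmetry of the connected sum. Bennequin's bound (Theorem~\ref{thm:Benn}) keeps $\mathcal{M}(K_1)$ bounded above, so a maximal nonsimple point $N_{max} \in \mathcal{M}(K_1)$ exists; write $tb_0 = tb(N_{max})$. Lemma~\ref{lem:nmax} supplies distinct $L_1^a, L_1^b \in \Phi^{-1}(N_{max})$, falling either into (Case~1) at least one lying in $Peak(\mathcal{L}(K_1))$, or (Case~2) each having a unique parent, which I name $P_\alpha$ and $P_\beta$. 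Pick $L_2 \in \mathcal{L}(K_2)$ realizing the maximum value $T$ of $tb$ on $\mathcal{L}(K_2)$; such an $L_2$ exists by Bennequin's bound and is automatically a peak. By Lemma~\ref{lem:classical} the classes $L_1^a \# L_2$ and $L_1^b \# L_2$ have equal classical invariants, and by Lemma~\ref{lem:paths2} showing they are distinct amounts to ruling out a path $\gamma$ with $L_1^b \in \gamma(L_1^a)$ and $L_2 \in \bar\gamma(L_2)$.

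Unpacking the definition of $\bar\gamma$, each letter's exponent is negated, so stabilizations in $\gamma$ correspond to destabilizations in $\bar\gamma$ and vice versa; this yields the step-wise conservation $tb(\gamma(t)) + tb(\bar\gamma(t)) = tb_0 + T$. Since every Legendrian in $\mathcal{L}(K_2)$ has $tb \leq T$, one deduces $tb(\gamma(t)) \geq tb_0$ for every $t$, i.e., the whole path $\gamma$ stays inside $\mathcal{L}(K_1) \cap \{tb \geq tb_0\}$. In Case~1, take $L_1^a$ to be a peak; any non-trivial first letter of $\gamma$ is then a stabilization (a peak admits no destabilization) that brings $\gamma$ to $tb = tb_0 - 1$, violating the constraint. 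So $\gamma$ is a word of $S_0$'s, forcing $L_1^a = L_1^b$ and contradicting their distinctness.

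In Case~2, the key claim is that $P_\alpha$ and $P_\beta$ share no common ancestor in $\mathcal{L}(K_1)$. For if some $P$ is a common ancestor, then by commutativity of stabilizations I can write $P_\alpha = S_+^{a_1}S_-^{b_1}(P)$ and $P_\beta = S_+^{a_2}S_-^{b_2}(P)$ for uniquely determined non-negative integers, and equating classical invariants of the two descendants $L_1^a = S_{\sigma_a}(P_\alpha)$ and $L_1^b = S_{\sigma_b}(P_\beta)$—with $\sigma_a \neq \sigma_b$, which is forced because $P_\alpha$ and $P_\beta$ must be the two distinct parents of the valley $N_{max}$—yields $a_2 = a_1 + 1$ and $b_1 = b_2 + 1$, so that $L_1^a$ and $L_1^b$ both present as the same word $S_+^{a_2} S_-^{b_1}(P)$, a contradiction. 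Since the subgraph $\mathcal{L}(K_1) \cap \{tb > tb_0\}$ coincides with $\mathcal{M}(K_1) \cap \{tb > tb_0\}$ (as $N_{max}$ is maximal nonsimple), the absence of a common ancestor places $P_\alpha$ and $P_\beta$ in distinct connected components of that subgraph, and hence $L_1^a$ and $L_1^b$ in distinct components of $\mathcal{L}(K_1) \cap \{tb \geq tb_0\}$. This contradicts the existence of $\gamma$ inside that region, completing the proof.

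The main obstacle is Case~2: one must combine the step-wise $tb$-conservation, Bennequin's bound on $\mathcal{L}(K_2)$ (which confines $\gamma$ above $tb_0$), and the commutativity of $S_\pm$ (which forbids any nontrivial merging of the ancestor chains of $P_\alpha$ and $P_\beta$) to force the desired connectivity obstruction inside the simple upper region of $\mathcal{L}(K_1)$.
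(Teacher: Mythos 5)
Your proposal is correct and takes essentially the same route as the paper, whose proof of Proposition~\ref{prop:simple2} simply repeats the proof of Proposition~\ref{prop:simple1} with Corollary~\ref{cor:peaks2} and Lemma~\ref{lem:paths2}: the $N_{max}$ dichotomy of Lemma~\ref{lem:nmax}, summing with a $tb$-maximal peak of the other summand, the stepwise $tb$-conservation between $\gamma$ and $\bar\gamma$ forcing $\gamma$ to stay in $\{tb\ge tb(N_{max})\}$, and the disconnectedness of the simple region above $N_{max}$. The one soft spot is your inference that absence of a common ancestor by itself places $P_\alpha$ and $P_\beta$ in distinct components (false for abstract posets---two peaks joined through a high valley are connected yet share no ancestor---so here one really needs the hillside geometry: no point of $\mathcal{M}(K_1)$ lies in the cone over the would-be common parent, splitting the region along the two hillsides through $N_{max}$), but the paper asserts the very same disconnection with no more justification, so your write-up matches its argument step for step.
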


Hence for the connected sum to be Legendrian simple, each of its summands must be Legendrian simple.
However, even for Legendrian simple knots, their connected sum need not be Legendrian simple when they have many peaks as follows.

\begin{prop}\label{prop:simplepeaks1}
Let $K$ be a prime and Legendrian simple knot. Then $\#^n K$ is Legendrian simple for $n\ge 2$ if and only if $|Peak(K)|\le 2$.

Moreover,
$$
|Peak(\#^n K)|=\begin{cases}
1 & \text{ if }|Peak(K)|=1;\\
n+1 & \text{ if }|Peak(K)|=2.
\end{cases}
$$
\end{prop}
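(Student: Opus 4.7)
My plan is to split on $|Peak(K)|$ and apply a normal-form reduction built from the type~(1) move of Theorem~\ref{thm:connectedsum}: iteratively, $(\dots,S_\pm(M_i),L_{i+1},\dots)\sim(\dots,M_i,S_\pm(L_{i+1}),\dots)$ lets us migrate every stabilization into a single component. I use throughout that $K$ is Legendrian simple, so each $L\in\mathcal{L}(K)$ is determined by $(tb(L),r(L))$; I invoke Lemma~\ref{lem:classical} to read off classical invariants of connected sums, and Corollary~\ref{cor:peaks1}, which gives $|Peak(\#^n K)|=|\operatorname{Sym}^n(Peak(K))|$.

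When $|Peak(K)|=1$ with unique peak $P$, each $L_i=S_+^{a_i}S_-^{b_i}(P)$ is unique, and migration reduces the tuple to the single normal form $(S_+^{A}S_-^{B}(P),P,\dots,P)$ with $A=\sum a_i$, $B=\sum b_i$; these are recovered from $(tb,r)$, so $\#^n K$ is Legendrian simple with one peak. When $|Peak(K)|=2$ with peaks $P_1,P_2$ and valley $V$, the normal form becomes $(H,P_{c_2},\dots,P_{c_n})$ with $H=S_+^{A}S_-^{B}(P_{c_1})$; after permuting (type~(2)), each class is parametrized by $(m,A,B)$ where $m$ counts the $P_1$-based slots. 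Lemma~\ref{lem:classical} then gives
$$tb=m\,t_1+(n-m)\,t_2-A-B+(n-1),\qquad r=m\,r_1+(n-m)\,r_2+A-B.$$
Below $V$ the identity $S_+^{A}S_-^{B}(P_1)=S_+^{A-d}S_-^{B-e}(P_2)$ with $d=(t_1-t_2+r_2-r_1)/2$ and $e=(t_1-t_2-r_2+r_1)/2$ (integers by the common parity of $t_i-r_i$) produces a second parametrization $(m-1,A-d,B-e)$ of the same $\sim$-class. The valid $(m,A,B)$'s at a fixed $(tb,r)$ form an interval in $m$, and consecutive values in this interval are always shift-linked (the shift's endpoint is automatically valid whenever both $m$ and $m-1$ are valid), so the interval is a single $\sim$-orbit. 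Hence a unique class per $(tb,r)$, and $|Peak(\#^n K)|=|\operatorname{Sym}^n\{P_1,P_2\}|=n+1$.

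For $|Peak(K)|\ge 3$ and $n\ge 2$ I exhibit an explicit collision. Pick three distinct peaks $P_1,P_2,P_3$ and compare $(P_1,P_3,P_2,\dots,P_2)$ with $(P_2,\dots,P_2)$: if their classical invariants already agree (the affine-midpoint case $t_1+t_3=2t_2$, $r_1+r_3=2r_2$), Corollary~\ref{cor:peaks1} exhibits these as two distinct peaks of $\mathcal{L}(\#^n K)$ with the same $(tb,r)$, and we are done; otherwise I stabilize each tuple by the minimum amount needed to reach a common lattice point of $\mathcal{M}(\#^n K)$. In the $(m_1,m_2,m_3,A,B)$-parametrization, the pairwise shifts $(m_i,m_j)\mapsto(m_i-1,m_j+1)$ adjust $(A,B)$ by offsets $\delta_A^{ij},\delta_B^{ij}$ computed exactly as $d,e$ above, and the minimality of the stabilization forces every single such shift to send $A$ or $B$ below zero. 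Thus the two normal forms lie in different shift-orbits and hence in different $\sim$-classes, giving the collision. The main obstacle is this last point: the signs of $\delta_A^{ij},\delta_B^{ij}$ depend on the relative $(tb,r)$-positions of $P_1,P_2,P_3$, so proving ``no legal intermediate shift exists'' uniformly needs a short sign-based case analysis (together with the relations among $t_i,r_i$ forced by the valleys of $K$); the remaining bookkeeping is routine.
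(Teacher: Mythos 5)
Your treatment of the cases $|Peak(K)|=1$ and $|Peak(K)|=2$ is correct and is essentially the paper's own argument in different clothing: your interval-in-$m$ shift analysis is the same telescoping computation that the paper packages into the invariants $X(L)=q\cdot r'(P_2)-b$ and $Y(L)=p\cdot r'(P_1)+a$, and it legitimately proves simplicity there because you only need the \emph{positive} direction --- equal classical invariants imply equal classes --- and each single shift $S_+^{d}(P_1)=S_-^{-e}(P_2)$ is an honest identity in $\mathcal{L}(K)$ by Legendrian simplicity, transported into $\mathcal{L}(\#^nK)$ via Theorem~\ref{thm:connectedsum}. The peak counts via Corollary~\ref{cor:peaks1} are also as in the paper.

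The genuine gap is in the $|Peak(K)|\ge 3$ direction, where you must prove a \emph{negative} statement: that two normal forms are $\sim$-inequivalent. Your argument tacitly assumes that two tuples in normal form can only be equivalent through a chain of legal pairwise shifts $(m_i,m_j)\mapsto(m_i-1,m_j+1)$ with all intermediate exponents nonnegative --- but that combinatorial model of the equivalence classes is exactly what needs proof, and it is where the content lies. By Lemma~\ref{lem:paths1}, an equivalence is realized by $n$ synchronized multi-paths which may stabilize components deeply, pass through tuples far from normal form, and re-anchor a component from one peak to another anywhere in the overlap of two cones, not only via your one-step shifts; so ``no legal shift exists from the minimal representatives'' does not by itself rule out equivalence. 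Even the deferred sign claim is doubtful as stated: the offsets $\delta_A^{ij},\delta_B^{ij}$ can be negative (a shift may \emph{raise} $A$ or $B$), so minimality of the common stabilization does not force every shift out of a minimal representative to be illegal. What is missing is a tuple invariant preserved by the elementary moves that separates the two candidates, and this is precisely what the paper supplies: it chooses two distinct valleys $V_1,V_2$ with parent pairs $L_i,L_i'$ --- so the two connected sums share classical invariants with \emph{no} auxiliary stabilization needed, unlike your stabilized peak tuples --- and then shows no multi-path $\gamma_i$ can cross the hillside lines $\ell_1:tb-r=tb(V_1)-r(V_1)$ and $\ell_2:tb+r=tb(V_2)+r(V_2)$, since the crossing move $S_+$ would have to be matched by an $S_+^{-1}$ in another component lying above $\ell_2'$, where $\mathcal{M}(K)$ has no points. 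Your midpoint sub-case (handled by Corollary~\ref{cor:peaks1}) is fine but covers only the special configuration $t_1+t_3=2t_2$, $r_1+r_3=2r_2$; to close the general case you need to import a confinement argument of the paper's hillside type, or some equivalent separating wall.
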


\begin{prop}\label{prop:simplepeaks2}
Let $K_1$ and $K_2$ be relatively prime and Legendrian simple knots.
Then $K_1\# K_2$ is Legendrian simple if and only if 
either $|Peak(K_1)|=1$ or $|Peak(K_2)|=1$. In this case, 
$$|Peak(K_1\# K_2)| = |Peak(K_1)|\cdot |Peak(K_2)|.$$
\end{prop}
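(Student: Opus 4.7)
The plan is to establish the two directions of the equivalence separately.

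For the backward direction, suppose without loss of generality that $|Peak(K_1)|=1$ and call its unique peak $P$. I would rewrite an arbitrary class $[(L_1,L_2)]\in\mathcal{L}(K_1)\times\mathcal{L}(K_2)/\!\sim$ in the normal form $[(P,M)]$: since $K_1$ is Legendrian simple with a single peak, $L_1=S_-^aS_+^b(P)$ for unique $a,b\ge 0$, and the type-(1) relation in Theorem~\ref{thm:connectedsum} lets me slide these stabilizations across the connected sum one at a time, leaving $M=S_-^aS_+^b(L_2)$. For uniqueness, Lemma~\ref{lem:paths2} reduces $[(P,M)]=[(P,M')]$ to the existence of a realizable path $\gamma$ with $P\in\gamma(P)$; such a $\gamma$ has zero net displacement in $\Phi$, so $\bar\gamma$ also has zero net displacement, and $M'\in\bar\gamma(M)$ forces $\Phi(M')=\Phi(M)$, hence $M=M'$ by simplicity of $K_2$. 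This yields a bijection $\mathcal{L}(K_1\#K_2)\leftrightarrow\mathcal{L}(K_2)$ compatible with $\Phi$ via Lemma~\ref{lem:classical}, so Legendrian simplicity transfers from $K_2$ to $K_1\#K_2$.

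For the forward direction, I would prove the contrapositive: if both $|Peak(K_i)|\ge 2$, then $K_1\#K_2$ is not Legendrian simple. Fix distinct peaks $P,P'\in Peak(K_1)$ and $Q,Q'\in Peak(K_2)$ and set $\alpha=\Phi(P')-\Phi(P)$, $\beta=\Phi(Q')-\Phi(Q)$. Since distinct peaks lie outside each other's stabilization cones, $|\alpha_r|>|\alpha_t|$ and $|\beta_r|>|\beta_t|$, and the scalar identity $|x+y|+|x-y|=2\max(|x|,|y|)$ applied coordinatewise forces at least one of $\alpha\pm\beta$ to be dominated by its $r$-component; after possibly relabeling $P\leftrightarrow P'$ or $Q\leftrightarrow Q'$ I may assume $(\alpha-\beta)_r\ge|(\alpha-\beta)_t|\ge 0$. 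Setting $u=\frac{1}{2}((\alpha-\beta)_r-(\alpha-\beta)_t)$ and $u'=\frac{1}{2}((\alpha-\beta)_r+(\alpha-\beta)_t)$ (integers by the common $tb+r$ parity within each mountain range, and non-negative by the inequality above), I define
\[
L_1=S_+^u(P),\quad L_2=Q',\quad L_1'=S_-^{u'}(P'),\quad L_2'=Q,
\]
for which Lemma~\ref{lem:classical} gives $\Phi(L_1\#L_2)=\Phi(L_1'\#L_2')$ by direct computation.

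It remains to show $L_1\#L_2\ne L_1'\#L_2'$ in $\mathcal{L}(K_1\#K_2)$. By Lemma~\ref{lem:paths2} this amounts to ruling out a realizable path $\gamma$ with $L_1'\in\gamma(L_1)$ and $L_2'\in\bar\gamma(L_2)$. The key observation is that, applying $\gamma$ on the left and $\bar\gamma$ on the right step by step, the sum $\Phi(L_1^{(i)})+\Phi(L_2^{(i)})$ is invariant (stabilizing one side mirrors destabilizing the other), so the trajectory of $\Phi(L_1^{(i)})$ is confined to $\mathcal{M}(K_1)\cap(S^*-\mathcal{M}(K_2))$ where $S^*=\Phi(L_1)+\Phi(L_2)$. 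At $\Phi(L_1)=S_+^u(P)$, the partner $S^*-\Phi(L_1)$ equals $Q'$, a peak of $K_2$, which blocks further stabilization of $\Phi(L_1^{(i)})$ in either $S_\pm$-direction; combined with $P$ being a peak of $K_1$, the connected component of $\Phi(L_1)$ in the intersection collapses to the $S_+$-line $\{P,S_+(P),\ldots,S_+^u(P)=L_1\}$, and symmetrically the component of $\Phi(L_1')$ collapses to the $S_-$-line from $P'$ down to $L_1'$. These two lines are disjoint because $P\ne P'$, so no path $\gamma$ can exist. Finally, the peak-counting formula $|Peak(K_1\#K_2)|=|Peak(K_1)|\cdot|Peak(K_2)|$ under simplicity of $K_1\#K_2$ is immediate from Corollary~\ref{cor:peaks2} together with $Peak(\mathcal{L}(K_i))=Peak(K_i)$ and $Peak(\mathcal{L}(K_1\#K_2))=Peak(K_1\#K_2)$ provided by simplicity. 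The main obstacle is the component analysis of $\mathcal{M}(K_1)\cap(S^*-\mathcal{M}(K_2))$---one must verify carefully that no alternate route through the intersection (potentially using other peaks of either $K_i$) can connect the two trapped lines, and this is precisely where the hypothesis that distinct peaks never lie in each other's stabilization cones is decisive.
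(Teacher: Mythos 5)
Your ``if'' direction and the peak count are sound and essentially the paper's argument: put every class in the normal form $P\#M$ by sliding stabilizations into the second summand, then recover $M$ from the classical invariants of the sum via Lemma~\ref{lem:classical} and simplicity of $K_2$; the count is Corollary~\ref{cor:peaks2}. (Your extra zero-net-displacement check of injectivity is correct but not needed for simplicity.) The ``only if'' direction, however, departs from the paper --- which compares $L_1\#L_2'$ with $L_1'\#L_2$ built from the two \emph{parents} of a \emph{valley} $V_i$ in each summand and rules out a connecting path by the hillside argument of Proposition~\ref{prop:simplepeaks1} --- and your replacement has a genuine gap, precisely at the step you flag at the end: the claim that the component of $\Phi(L_1)$ in $\mathcal{M}(K_1)\cap\bigl(S^*-\mathcal{M}(K_2)\bigr)$ collapses to the $S_+$-segment under $P$ is false, and in fact your chosen pair can be Legendrian isotopic. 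Take simple $K_1$, $K_2$ with
$$\Phi(P)=(0,0),\quad \Phi(P')=(1,3),\qquad \Phi(Q)=(-1,3),\quad \Phi(Q')=(0,0)$$
in $(tb,r)$-coordinates (all mutual cone conditions $|\Delta tb|<|\Delta r|$ hold, so these are legitimate two-peak staircases; shift $r$ by $1$ if you insist on the parity of honest knots). Then $\alpha=(1,3)$, $\beta=(1,-3)$, your normalization $(\alpha-\beta)_r\ge|(\alpha-\beta)_t|$ already holds with $\alpha-\beta=(0,6)$, so $u=u'=3$ and your pair is $L_1=S_+^3(P)$ at $(-3,3)$, $L_2=Q'$, $L_1'=S_-^3(P')$ at $(-2,0)$, $L_2'=Q$. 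But the path $\gamma=S_-S_+^{-1}S_+^{-1}$ satisfies $L_1'\in\gamma(L_1)$ and $L_2'\in\bar\gamma(L_2)$: on the left, $S_+^{-2}$ takes $L_1$ down to $S_+(P)$ at $(-1,1)$ (the valley of $K_1$) and then $S_-S_+(P)=S_-^3(P')$ by simplicity of $K_1$ (both sit at $(-2,0)$); on the right, $\bar\gamma=S_-^{-1}S_+S_+$ takes $Q'$ to $S_+^2(Q')$ at $(-2,2)$, which equals $S_-(Q)$ by simplicity of $K_2$, so the final destabilization $S_-^{-1}$ is performable and lands on $Q=L_2'$. By Lemma~\ref{lem:paths2} your two connected sums are equal, so this pair cannot witness nonsimplicity, and the component of $\Phi(L_1)$ in the intersection visibly contains $\Phi(L_1')$.

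The root of the failure is that the ``blocking'' by the peak $Q'$ is only instantaneous: once the partner stabilizes off its peak it may cross into the cone of the \emph{other} peak of $K_2$ and regain destabilizability in the previously blocked direction --- the identity $S_+^2(Q')=S_-(Q)$, forced by the very simplicity of the summands, is exactly the move your trapping argument overlooks. The paper's valley--parent choice is immune to this: there the relevant mirrored destabilization would have to occur at a point on the hillside $\ell_2'$ above which $\mathcal{M}(K_2)$ contains no points at all, so it can never be performed regardless of how many peaks either summand has. To repair your proof you should re-choose the pair (take $V_i\in Valley(K_i)$, which exist since $|Peak(K_i)|\ge2$ and $|Peak|=|Valley|+1$, with parents $L_i,L_i'$ satisfying $r(V_i)=r(L_i)+1=r(L_i')-1$, and compare $L_1\#L_2'$ with $L_1'\#L_2$) rather than try to patch the component analysis.
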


Then Theorem~\ref{thm:criterion} is nothing but the reorganization of the propositions above.

Now we prove the propositions.

\begin{proof}[Proof of Proposition~\ref{prop:simple1}]
Let $L\in Peak(\mathcal{L}(K))$ be a maximal element, and suppose that $K$ is Legendrian nonsimple but $\#^n K$ is Legendrian simple.
Then we can choose $N_{max}\in\mathcal{N}(K)$ 
as before, and there are two cases as follows by Lemma~\ref{lem:nmax}.

If $\Phi^{-1}(N_{max})\cap Peak(\mathcal{L}(K_1))\neq\emptyset$, then there are two different Legendrian knots $L_1, L_1'\in \Phi^{-1}(N_{max})$ such that $L_1\in Peak(\mathcal{L}(K))$. Since $L\in Peak(\mathcal{L}(K))$ as well, $L_1\#(\#^{n-1}L)\in Peak(\mathcal{L}(\#^nK))$ by Corollary~\ref{cor:peaks1}.
If $L_1'\not\in Peak(\mathcal{L}(K))$, then $L_1'\# (\#^{n-1}L)\not\in Peak(\mathcal{L}(\#^nK))$. Therefore $L_1\#(\#^{n-1}L)$ and $L_1'\#(\#^{n-1}L)$ are different.
Otherwise, if $L_1'\in Peak(\mathcal{L}(K))$, then $L_1\#(\#^{n-1}L)$ and $L_1'\#(\#^{n-1}L)$ are still different since $[L_1,L,\dots,L]\neq[L_1',L,\dots,L]$ and by Corollary~\ref{cor:peaks1}.

If $\Phi^{-1}(N_{max})\cap Peak(\mathcal{L}(K))=\emptyset$, then $\Phi^{-1}(N_{max})=\{L_1, L_1'\}$ by lemma~\ref{lem:nmax}.
Suppose $L_1\#(\#^{n-1}L)$ and $L_1'\#(\#^{n-1}L)$ are equivalent. 
Then by Lemma~\ref{lem:paths1}, we may assume that there are paths $\gamma_1,\dots,\gamma_n$ such that $\gamma_1$ is realizable at $L_1$.
Note that all $\gamma_i$'s are lying above $N_{max}$ except for end points if we take $\Phi$.
Since all points above $N_{max}$ are simple, we may identify $\mathcal{L}(K)$ and $\mathcal{M}(K)$ above $N_{max}$.

If $L_1'\not\in\gamma_1(L_1)$, then $L\in\gamma_1(L_1)$ and $L_1'\in\gamma_i(L)$ for some $i\neq 1$. This implies that both $L_1$ and $L_1'$ can be joined with $L$ above $N_{max}$ but this is impossible because parents of $L_1$ and $L_1'$ are lying in the different connected components of the region above $N_{max}$.

On the other hand, if $L_1'\in\gamma_1(L_1)$ then by the exactly same reason about the parents of $L_1$ and $L_1'$, this is impossible too.
%

In all cases, $L_1\#(\#^{n-1}L)$ and $L_1'\#(\#^{n-1}L)$ are different but it is obvious that they share the classical invariants. Therefore this contradicts to the Legendrian simplicity of $\#^nK$.
\end{proof}

\begin{proof}[Proof of Proposition~\ref{prop:simple2}]
The proof is essentially same as the previous one by using Corollary~\ref{cor:peaks2} and Lemma~\ref{lem:paths2} instead of Corollary~\ref{cor:peaks1} and Lemma~\ref{lem:paths1}.
%
%
\end{proof}

\begin{proof}[Proof of Proposition~\ref{prop:simplepeaks1}]
Recall that $|Peak(K)| = |Valley(K)| + 1$ for a Legendrian simple knot $K$.

Suppose $|Peak(K)|\ge 3$. Then there are at least two Legendrian knots $V_1, V_2$ lying in $Valley(K)$.
We may assume that $r(V_1)<r(V_2)$.
For each $V_i$, there are two parents $L_i, L_i'$ so that $tb(L_i)=tb(L_i')=tb(V_i)+1$, and $r(V_1)= r(L_1) + 1 = r(L_1') - 1$, $r(V_2)=r(L_2)-1 = r(L_2')+1$.
In addition, we fix a maximal $L_3\in Peak(K)$ as depicted in Figure~\ref{fig:manypeaks}.

\begin{figure}[ht]
\begingroup%
  \makeatletter%
  \providecommand\color[2][]{%
    \errmessage{(Inkscape) Color is used for the text in Inkscape, but the package 'color.sty' is not loaded}%
    \renewcommand\color[2][]{}%
  }%
  \providecommand\transparent[1]{%
    \errmessage{(Inkscape) Transparency is used (non-zero) for the text in Inkscape, but the package 'transparent.sty' is not loaded}%
    \renewcommand\transparent[1]{}%
  }%
  \providecommand\rotatebox[2]{#2}%
  \ifx\svgwidth\undefined%
    \setlength{\unitlength}{252.15234375bp}%
    \ifx\svgscale\undefined%
      \relax%
    \else%
      \setlength{\unitlength}{\unitlength * \real{\svgscale}}%
    \fi%
  \else%
    \setlength{\unitlength}{\svgwidth}%
  \fi%
  \global\let\svgwidth\undefined%
  \global\let\svgscale\undefined%
  \makeatother%
  \begin{picture}(1,0.38900444)%
    \put(0,0){\includegraphics[width=\unitlength]{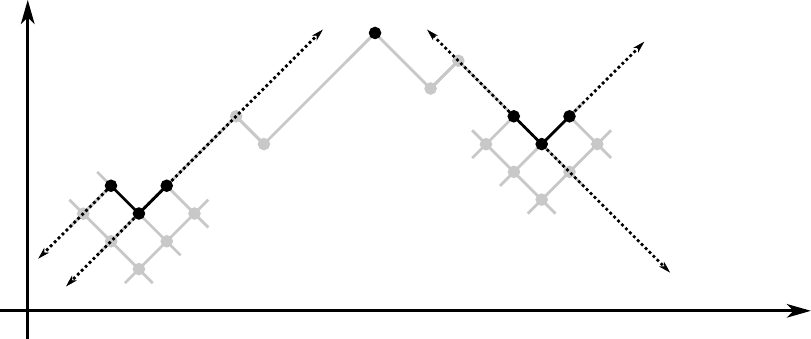}}%
    \put(0.15600068,0.10522122){\color[rgb]{0,0,0}\makebox(0,0)[lb]{\smash{$V_1$}}}%
    \put(0.1084104,0.21040178){\color[rgb]{0,0,0}\makebox(0,0)[lb]{\smash{$L_1$}}}%
    \put(0.11359096,0.07040178){\color[rgb]{0,0,0}\makebox(0,0)[lb]{\smash{$\ell_1$}}}%
    \put(0.17359096,0.21040178){\color[rgb]{0,0,0}\makebox(0,0)[lb]{\smash{$L_1'$}}}%
    \put(0.04359096,0.14040178){\color[rgb]{0,0,0}\makebox(0,0)[lb]{\smash{$\ell_1'$}}}%
    \put(-0.01263358,0.37489946){\color[rgb]{0,0,0}\makebox(0,0)[lb]{\smash{$tb$}}}%
    \put(1,0.03176752){\color[rgb]{0,0,0}\makebox(0,0)[lb]{\smash{$r$}}}%
    \put(0.61844975,0.2897189){\color[rgb]{0,0,0}\makebox(0,0)[lb]{\smash{$L_2'$}}}%
    \put(0.72844975,0.1197189){\color[rgb]{0,0,0}\makebox(0,0)[lb]{\smash{$\ell_2$}}}%
    \put(0.69363031,0.2897189){\color[rgb]{0,0,0}\makebox(0,0)[lb]{\smash{$L_2$}}}%
    \put(0.73363031,0.3497189){\color[rgb]{0,0,0}\makebox(0,0)[lb]{\smash{$\ell_2'$}}}%
    \put(0.40981549,0.38489947){\color[rgb]{0,0,0}\makebox(0,0)[lb]{\smash{$L_3$}}}%
    \put(0.64604003,0.19453835){\color[rgb]{0,0,0}\makebox(0,0)[lb]{\smash{$V_2$}}}%
  \end{picture}%
\endgroup%
\caption{Two valleys $V_i$, hillsides $\ell_i$, parents $L_i, L_i'$ and chosen peak $L_3$}
\label{fig:manypeaks}
\end{figure}

Suppose $L_1\#L_2\#(\#^{n-2}L_3)$ and $L_1'\#L_2'\#(\#^{n-2}L_3)$ are same in $\mathcal{L}(\#^nK)$.
Then there are paths $\gamma_1,\dots,\gamma_n$ as before. We may assume that $\gamma_1$ and $\gamma_2$ are realizable at $L_1$ and $L_2$, respectively.
For each valley $V_i$, a {\em hillside $\ell_i$} is the line defined by
$$
\ell_1: tb-r = tb(V_1)-r(V_1),\quad
\ell_2:tb+r = tb(V_2)+r(V_2).
$$

We claim that each $\gamma_i$ never hit the hillside $\ell_i$.

Suppose not for $\gamma_1$.
Then the last move just before hitting $\ell_1$ must correspond to $S_+$ from a point in the ray 
$$\ell_1':tb-r = tb(L_1)-r(L_1),\quad tb\le tb(L_1).$$
At that time, since $\gamma_1$ lies at the same level of the initial position or below and there is no point above $L_3$, the only possibility is that 
$\gamma_2$ must lie on the another hillside 
$$\ell_2':tb-r = tb(L_2) - r(L_2)$$
at the same level of $L_2$ or above.

By Lemma~\ref{lem:paths1}, the corresponding move in $\gamma_2$ is $S_+^{-1}$ but it can not be performed since there is no point above $\ell_2'$. This is a contradiction.
Similarly $\gamma_2$ never hit the hillside $\ell_2$, and the claim is proved.

This claim implies that $\gamma_1(L_1)$ and $\gamma_2(L_2')$ are separated by 2 lines $\ell_1$ and $\ell_2$, and therefore $L_1', L_2\not\in \gamma_1(L_1)$ and $L_1,L_2\not \in\gamma_2(L_2')$. 

The only possibility is that $L_3\in\gamma_1(L_1)\cap \gamma_2(L_2')$.

$L_1'\not\in\gamma_1(L_1)$ and $L_2\not\in\gamma_2(L_2')$, and therefore only possibility is that $L_3\in\gamma_1(L_1)\cap\gamma_2(L_2')$. However this is not possible either because the regions where $\gamma_i$'s are lying are separated by lines $\ell_1, \ell_2$. 
Therefore $L_1\#L_2'\#(\#^{n-2}L_3)$ and $L_1'\#L_2\#(\#^{n-2}L_3)$ are different but share the classical invariants. Hence $\#^n K$ is not Legendrian simple.

Suppose $|Peak(K)|=1$, that is, $\mathcal{L}(K)$ has the greatest element $L$ and all Legendrian knots in $\mathcal{L}(K)$ are of the form $S_+^aS_-^b(L)$.
Moreover, these stabilizations can be relocated freely among the connected summands. Hence any Legendrian knot in $\mathcal{L}(\#^n K)$ is equivalent to 
$S_+^a S_-^b(L) \# (\#^{n-1}L)$ for some $a,b\ge 0$, and its $tb$ and $r$ determine $a$ and $b$ uniquely. Therefore $\#^nK$ is Legendrian simple.

Finally, suppose $Peak(K)=\{P_1, P_2\}$ and $Valley(K)=\{V\}$. Without loss of generality, we may assume that $r(P_1)<r(V)<r(P_2)$. 
Then 
$$S_+^{-r'(P_1)} (P_1) = V = S_-^{r'(P_2)}(P_2),\text{ and } tb'(P_1) = -r'(P_1), tb'(P_2) = r'(P_2)$$
where $r'(L) = r(L) - r(V)$ and $tb'(L) = tb(L)-tb(V)$. See Figure~\ref{fig:2peaks}.

\begin{figure}[ht]
\begingroup%
  \makeatletter%
  \providecommand\color[2][]{%
    \errmessage{(Inkscape) Color is used for the text in Inkscape, but the package 'color.sty' is not loaded}%
    \renewcommand\color[2][]{}%
  }%
  \providecommand\transparent[1]{%
    \errmessage{(Inkscape) Transparency is used (non-zero) for the text in Inkscape, but the package 'transparent.sty' is not loaded}%
    \renewcommand\transparent[1]{}%
  }%
  \providecommand\rotatebox[2]{#2}%
  \ifx\svgwidth\undefined%
    \setlength{\unitlength}{191.01953125bp}%
    \ifx\svgscale\undefined%
      \relax%
    \else%
      \setlength{\unitlength}{\unitlength * \real{\svgscale}}%
    \fi%
  \else%
    \setlength{\unitlength}{\svgwidth}%
  \fi%
  \global\let\svgwidth\undefined%
  \global\let\svgscale\undefined%
  \makeatother%
  \begin{picture}(1,0.40879788)%
    \put(0,0){\includegraphics[width=\unitlength]{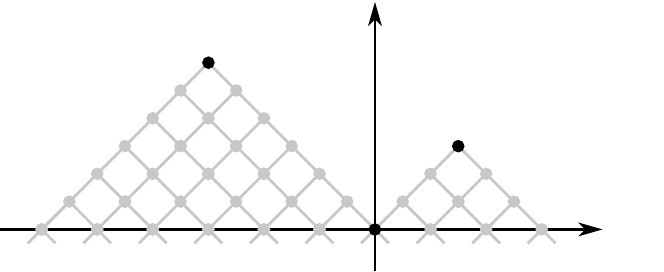}}%
    \put(0.58632748,0.08381475){\color[rgb]{0,0,0}\makebox(0,0)[lb]{\smash{$V$}}}%
    \put(0.50256641,0.37697849){\color[rgb]{0,0,0}\makebox(0,0)[lb]{\smash{$tb'$}}}%
    \put(0.92137175,0.06287448){\color[rgb]{0,0,0}\makebox(0,0)[lb]{\smash{$r'$}}}%
    \put(0.27222347,0.33509795){\color[rgb]{0,0,0}\makebox(0,0)[lb]{\smash{$P_1$}}}%
    \put(0.64914828,0.20945635){\color[rgb]{0,0,0}\makebox(0,0)[lb]{\smash{$P_2$}}}%
  \end{picture}%
\endgroup%
\caption{Mountain range with exactly 2 peaks}
\label{fig:2peaks}
\end{figure}

As before, all Legendrian knots in $\mathcal{L}(K)$ are of the form either $S_+^a S_-^b (P_1)$ or $S_+^a S_-^b (P_2)$, and any Legendrian knot $L$ in $\mathcal{L}(\#^n K)$ is equivalent to 
$S_+^a S_-^b\left(( \#^p P_1) \# (\#^q P_2)\right)$ for $p+q=n$.
We simply denote this by $L(a,b,p,q)$. Then for $L=L(a,b,p,q)$,
$$tb(L)= p\cdot tb(P_1) + q\cdot tb(P_2) + (n-1) - a -b,$$
$$r(L)= p\cdot r(P_1) + q\cdot r(P_2) + a-b.$$


We consider two invariants defined by using $tb$ and $r$ as follows.
$$X(L)=\frac12\left(tb(L) + r(L) - n(tb(V)+r(V)) - (n-1)\right)=q\cdot r'(P_2)-b,$$
$$Y(L)=-\frac12\left(tb(L) - r(L) -n(tb(V)- r(V)) - (n-1)\right)=p\cdot r'(P_1)+a.$$

Now suppose two Legendrian knots $L=L(a,b,p,q)$ and $L'=L(a',b',p',q')$ share the same $tb$ and $r$.
Then they also share $X$ and $Y$, and so
$$
X(L) - X(L') = (q-q') r'(P_2)- (b-b') = 0,
$$
$$
Y(L)-Y(L')=(p-p') r'(P_1)+(a-a') = 0. 
$$

If $b=b'$ or $a=a'$ then $p=p'$ and $q=q'$ since neither $r'(P_1)$ nor $r'(P_2)$ vanishes.

If $b>b'$, then $q>q'$, $p<p'$, and $$a'=a+(p-p') r'(P_1),\quad b=(q-q')r'(P_2)+b'.$$
Moreover, 
\begin{align*}
L(a,b,p,q) &=S_+^aS_-^b( (\#^p P_1)\#(\#^q P_2))\\
&=S_+^a S_-^{b'}\left( (\#^p P_1)\# (\#^{q'}P_2) \# (\#^{q-q'} S_-^{r'(P_2)} P_2)\right)\\
&=S_+^a S_-^{b'}\left( (\#^p P_1)\# (\#^{q'}P_2) \# (\#^{q-q'} V)\right)\\
&=S_+^a S_-^{b'}\left( (\#^p P_1)\# (\#^{q'}P_2) \# (\#^{p'-p} S_+^{-r'(P_1)}P_1)\right)\\
&=S_+^{a+(p-p')r'(P_1)} S_-^{b'}\left( (\#^{p'} P_1)\# (\#^{q'}P_2)\right)\\
&=S_+^{a'} S_-^{b'}\left( (\#^{p'} P_1)\# (\#^{q'}P_2)\right)=L(a',b',p',q').
\end{align*}

Conversely, the same result holds for $b<b'$ by changing the roles of $a, b,p,q$ and $a', b',p',q'$. Therefore the classical invariants determine exactly one Legendrian knot in $\mathcal{L}(\#^n K)$, and so $\#^n K$ is Legendrian simple.

The number of peaks directly follows from Corollary~\ref{cor:peaks1}.
\end{proof}

\begin{proof}[Proof of Proposition~\ref{prop:simplepeaks2}]
Suppose $|Peak(K_i)|\ge 2$, or $|Valley(K_i)|\ge 1$ for all $i$.
Let $V_i\in Valley(K_i)$ and $L_i, L_i'$ be two stabilizations of $V_i$ such that $r(V_i) = r(L_i) + 1 = r(L_i') -1$.
Then $L_1\#L_2'$ and $L_1'\#L_2$ are different by the essentially same argument as before and by Lemma~\ref{lem:paths2}. Therefore $K_1\#K_2$ is not Legendrian simple.

Suppose $|Peak(K_1)|=1$, and let $L$ be the unique maximal element of $\mathcal{L}(K_1)$. Then as before, all Legendrian knots in $\mathcal{L}(K_1)$ are of the form $S_+^aS_-^b(L)$, and so any Legendrian knot in $\mathcal{L}(K_1\#K_2)$ is equivalent to $L\# L_2$ for some $L_2\in\mathcal{L}(K_2)$ by moving all stabilizations to the second summand.
Therefore, the classical invariants for $L\#L_2$ determine not only the classical invariants for $L_2$, but also $L_2$ itself since $K_2$ is Legendrian simple. This implies the Legendrian simplicity for $K_1\#K_2$.

The number of peaks directly follows from Corollary~\ref{cor:peaks2}.
\end{proof}


\end{document}